\newtheorem{theorem}{Theorem}
\newtheorem{lemma}{Lemma}
\newtheorem{definition}{Definition}
\newtheorem{corollary}{Corollary}
\newtheorem{remark}{Remark}
\newtheorem{example}{Example}
\crefname{figure}{Figure}{Figure}
\crefname{table}{Table}{Table}
\crefname{equation}{Eq.}{Eq.}
\crefname{theorem}{Theorem}{Theorem}
\crefname{lemma}{Lemma}{Lemma}
\crefname{algorithm}{Algorithm}{Algorithm}
\crefname{corollary}{Corollary}{Corollary}
\crefname{remark}{Remark}{Remark}
\crefname{definition}{Definition}{Definition}
\begin{document}
\begin{frontmatter}

\title{A novel parameter-free and  locking-free enriched Galerkin method for linear elasticity }

\author[1]{Shuai Su}
\ead{shuaisu@bjut.edu.cn}

\author[1]{Xiurong Yan}
\ead{yanxr@emails.bjut.edu.cn}

\author[2]{Qian Zhang\corref{cor1}}
\ead{qzhang15@mtu.edu}

\cortext[cor1]{Corresponding author.}

\address[1]{\label{a}  School of Mathematics, Beijing University of Technology, Beijing 100124, China}

\address[2]{Department of Mathematical Sciences, Michigan Technological University, Houghton, MI, 49931, USA}

\begin{abstract}
We propose a novel parameter-free and locking-free enriched Galerkin (EG) method for solving the linear elasticity problem in both two and three dimensions. Unlike existing locking-free EG methods, our method enriches the first-order continuous Galerkin (CG) space with piecewise constants along edges in two dimensions  or faces in three dimensions. This enrichment acts as a correction to the normal component of the CG space, ensuring the locking-free property and delivering an oscillation-free stress approximation without requiring post-processing. Our theoretical analysis establishes the well-posedness of the method and derives optimal error estimates. Numerical experiments further demonstrate the accuracy, efficiency, and robustness of the proposed method.

\end{abstract}

\begin{keyword}
Locking-free,
Parameter-free,
Enriched Galerkin,
Linear elasticity
\end{keyword}

\end{frontmatter}

\section{Introduction}\label{sec1}
Linear elasticity models the deformation and stress distribution in materials or structures under external forces, and is widely applied in structural analysis and engineering design. In this paper, we introduce and analyze a robust and efficient numerical method for solving the linear elasticity problem. 

Various finite element methods have been developed for solving the linear elasticity problem. However, certain widely used elements, such as continuous piecewise linear or bilinear elements \cite{babuvska1992locking}, suffer from a loss of numerical accuracy as the material approaches near-incompressibility, or equivalently, as the Lam\'e constant \( \lambda \to \infty \).  This issue, known as \textit{volumetric locking}, typically manifests in two ways: a severe underestimation of displacement and oscillations in the stress tensor. 
It occurs when the numerical method fails to properly accommodate the incompressibility constraint ($\nabla\cdot\bm u\approx 0$). In theory, the error estimates for displacement and stress exhibit an explicit dependence on $\lambda$.

To circumvent locking, various approaches have been proposed based on the primal formulation, including \( C^0 \) finite elements with reduced integration \cite{brezzi2012mixed}, high-order \( C^0 \) elements \cite{babuvska1992locking}, and nonconforming finite elements \cite{brenner1992linear,falk1991nonconforming}. It is worth noting that these methods are locking-free in the sense of Babu\v{s}ka and Suri \cite{babuvska1992locking} -- that is, they primarily address locking in terms of displacement and energy. In the nearly incompressible case, additional post-processing techniques are required to obtain an accurate approximation of hydrostatic stress $\frac1d\operatorname{tr}{\bm\sigma}$ or pressure $\lambda \nabla\cdot\bm u$.
To address this issue, one approach is to use the divergence-free Stokes velocity elements \cite{BURMAN2020113224} in the primal formulation. However, constructing such elements poses significant challenges, especially in three dimensions (3D). Consequently, the resulting elements have either a large number of degrees of freedom (DoFs) or complex macro-element structures \cite{neilan2015discrete,fu2020exact,guzman2022exact,hu2022family}. 
Another approach is to apply the mixed formulation with stress tensor $\bm\sigma$ \cite{brezzi2012mixed}. Yet, it requires a careful selection of inf-sup stable finite element pairs that also preserve stress symmetry \cite{johnson1978some,arnold2002mixed,arnold2008finite,hu2015family}. In addition, introducing the stress tensor in the formulation increases the computational costs.
Alternatively, discontinuous Galerkin (DG) methods, see, e.g., \cite{cockburn2006discontinuous,wang2020mixed} and weak Galerkin (WG) methods, see, e.g., \cite{yi2019lowest,chen2016robust,huo2024locking} also address locking issue. Nonetheless, these methods typically require more DoFs to achieve the same convergence rates due to the use of piecewise polynomials. 

Recently, the enriched Galerkin (EG) method has attracted considerable attention due to its high efficiency. It was originally introduced by Sun and Liu \cite{sun2009locally} for solving second-order elliptic problems and shown to be locally mass conservative. Its key idea is to enrich the continuous Galerkin (CG) finite element space with a DG space and incorporate it into a DG formulation. The EG method retains the advantageous properties of the DG method while maintaining a computational cost comparable to that of the CG method.
To date, the EG method has been successfully applied to a wide range of problems, including elliptic and parabolic equations in porous media \cite{lee2016locally}, two-phase flow simulations \cite{hu2024pressure,lee2018enriched}, and the Stokes problem \cite{chaabane2018stable,yi2022enriched}, etc. In particular, 
a first-order locking-free EG method based on the primal formulation was proposed in \cite{yi2022locking} for solving linear elasticity problem, and later, a parameter-free and arbitrary-order extension was introduced in \cite{su2024parameter}. While extensive numerical experiments have confirmed the locking-free property of the latter EG method in \cite{su2024parameter}, establishing a rigorous theoretical foundation remains challenging.
In addition, both methods in \cite{yi2022locking,su2024parameter} require a post-processing technique to achieve oscillation-free stress approximation. 

To address this, we introduce a novel parameter-free and locking-free EG (PF\&LF-EG) method for  linear elasticity problems in both two dimensions (2D) and 3D.  Unlike traditional EG methods that enrich the CG space with a DG space defined on elements, our approach enriches the first-order CG space with a DG space of piecewise constants on edges in  2D  or faces in 3D. This EG space enables us to establish a commutativity property between projections and the weak divergence operator. Consequently, we derive rigorous error estimates that remain independent of $\lambda$, providing a theoretical guarantee of the method's locking-free property.
To achieve a parameter-free formulation, we define a weak gradient and weak divergence by incorporating the enriched component and replace the standard gradient and divergence operators in the weak formulation with their weak counterparts. To ensure well-posedness, we introduce a stabilization term similar to that used in the WG method.
The PF\&LF-EG method has four key features:
\begin{itemize}
\item It is theoretically proven to be locking-free and achieves accurate stress approximation without requiring post-processing techniques.
\item Compared to mixed methods, WG methods, and DG methods, it significantly reduces the DoFs, enhancing computational efficiency.
\item It produces a continuous displacement approximation. 
\item It is parameter-free, eliminating the need for additional parameter tuning.
\end{itemize}




The remainder of this paper is organized as follows. 
Section \ref{sec:pre} introduces some notations and the model problem.  
In Section \ref{sec:EG}, we develop our novel PF\&LF-EG method for linear elasticity. In Section \ref{sec:Theore}, we establish the theoretical analysis of the  method including the well-posedness and error estimates. Several numerical experiments are conducted  in Section \ref{sec:num} to validate the effectiveness of the method and some concluding remarks are given in Section \ref{conclusion}.

\section{Preliminaries} \label{sec:pre}
\subsection{Notations}
Throughout this paper,
we denote by \( H^s(D) \) the Sobolev space  on a bounded Lipschitz domain \( D \subset \mathbb{R}^d \)  with the associated norm \( \|\cdot\|_{s,D} \), where \( d =2, 3 \) and \( s \geq 0 \). The space \( H^0(D) \) coincides with \( L^2(D) \) where the \(L^2\)-inner product  is denoted by \( (\cdot, \cdot)_D \). For simplicity, we omit the subscript \( D \) or \( s \)  when \( D = \Omega \) or \( s = 0 \), provided that no ambiguity arises. We also denote by  \( P_{\ell}(D) \) the space of  ${\ell}$-th order polynomials on  \( D \). These notations extend naturally to vector- and tensor-valued Sobolev spaces. 
We denote by $\big[P_0(D)\big]^{d\times d}_{\operatorname{sym}}$ the subspace of $\big[P_0(D)\big]^{d\times d}$ consisting of all symmetric matrices. 

Let \( \mathcal{T}_h \) be a quasi-uniform mesh partitioning the domain \( \Omega \) into triangles in 2D or tetrahedra in 3D. Denote by \( \mathcal{E}_h \) the set of all edges in 2D or faces in 3D associated with \( \mathcal{T}_h \). For each element \( T \in \mathcal{T}_h \), let \( h_T \) be its diameter, and define the mesh size as  $h = \max_{T \in \mathcal{T}_h} h_T$.
The space of piecewise polynomials of degree at most \( \ell \) over \( \mathcal{T}_h \) is denoted by \( P_{\ell}(\mathcal{T}_h) \). 

We use \( C \)  to denote a generic positive constant, independent of the mesh size \( h \), though its value may vary in different estimates.

\subsection{The model problem}
We consider the linear elasticity problem on a connected, bounded, and Lipschitz domain $\Omega \subset \mathbb{R}^d(d=2,3)$. The problem finds the displacement vector $\bm u$ such that
\begin{subequations}\label{eq:l1}
\begin{align}
-\nabla \cdot \bm{\sigma}(\bm{u}) & =\boldsymbol{f} & & \text { in } \Omega, \label{eq:l1a}\\
\bm{u} & =\bm 0 & & \text { on } \partial \Omega, \label{eq:l1b}
\end{align}
\end{subequations}
where $\bm{f}$ is the body force and $\bm{\sigma}$ is the symmetric $d \times d$ stress tensor defined by
$$
\bm{\sigma}(\bm{u}):=2 \mu \bm{\epsilon}(\bm{u})+\lambda(\nabla \cdot \bm{u}) \mathbf{I}.
$$ 
Here, $\bm{\epsilon}(\bm{u}) = \frac{1}{2}(\nabla \bm{u} + (\nabla \bm{u})^\mathrm{T})$ is the strain tensor, $\mathbf{I}$ is the $d \times d$ identity matrix, and $\lambda$,  $\mu$ are the Lam\'e parameters. The parameters $\lambda$,  $\mu$  satisfy $0 < \lambda < \infty$ and $0 < \mu_1 < \mu < \mu_2$ for some positive constants $\mu_1$ and $\mu_2$.  
For plane strain in 2D and for 3D, the Lam\'e parameters $\lambda$ and $\mu$ can be expressed in terms of the Young's modulus $E$ and the Poisson's ratio $\nu$ as  
\[
\lambda = \frac{E \nu}{(1+\nu)(1-2\nu)}, \quad \mu = \frac{E}{2(1+\nu)}.
\]
The primal formulation of \eqref{eq:l1} is to seek $\bm{u} \in\left[H_0^1(\Omega)\right]^d$ such that 
\begin{align}\label{primal-form}
    2 \mu(\bm{\epsilon}(\bm{u}), \bm{\epsilon}(\bm{v}))+\lambda(\nabla \cdot \bm{u}, \nabla \cdot \bm{v})=(\bm{f}, \bm{v}) \quad
\hbox{ for any }  \bm{v} \in\left[H_{0}^1(\Omega)\right]^d.
\end{align}
Assume that the solution \( \bm u \) satisfies the \( H^2 \)-regularity estimate \cite{brenner1992linear,susanne1994mathematical,grisvard1992singularities}  
\begin{align}\label{regularity}
  \|\bm u\|_2+\lambda\|\nabla \cdot \bm u\|_1 \leq C\|\bm f\|.
\end{align}

\section{A novel  parameter-free and locking-free enriched Galerkin method}\label{sec:EG}
In this section, we introduce the PF\&LF-EG method for solving the linear elasticity problem.

We first define the finite element space. Recall the vector-valued linear CG finite element space
\begin{equation*}
 \mathrm{CG}=\left\{\bm {v} \in [H^1(\Omega)]^d:\left.\bm {v}\right|_T \in\left[P_1(T)\right]^d \hbox{ for all }  T \in \mathcal{T}_h\right\}.
\end{equation*}
The EG space is obtained by enriching the CG space with an appropriate DG space. 
As noted in \cite[Remark 1]{su2024parameter}, for the EG method to be locking-free, it is essential that the interpolation operator and the (weak) divergence operator satisfy a commutativity property. Moreover, this property plays a crucial role in achieving an oscillation-free approximation of the stress tensor. However, proving the commutativity property becomes challenging when the CG space is enriched solely with piecewise polynomials defined on elements as the EG method in \cite{su2024parameter}.
To address this issue, in this paper, we define a DG space which contains piecewise constant functions over $\mathcal{E}_h$:
\begin{equation*}
\mathrm{DG}=\left\{v \in L^2\left(\mathcal{E}_h\right):\left.v\right|_e \in P_0(e) \hbox{ for all }   e \in \mathcal{E}_h\right\}.
\end{equation*}
Now we introduce the EG space $\bm V_h$  for the  
displacement $\bm u$ as
$$
\bm V_h :=\left\{\bm {v}_h=\left\{\bm{v}_0, v_b\right\}: \bm{v}_0 \in \mathrm{CG} \text { and } v_b \in \mathrm{DG}\right\}.
$$
Here we note that the component $v_b|_e$ serves as a correction to $\frac{1}{|e|} \int_e \bm{v}_0 \cdot \bm{n}_e \mathrm{~d} s$, where $\bm {n}_e$ represents the assigned unit normal vector to the edge $e$.
Compared with the discrete spaces in the  hybrid high-order  \cite{di2014arbitrary}
and WG methods \cite{wang2013weak}, the key distinction is the usage of the
CG space instead of DG space for $\bm {u}_0$. 

To realize parameter-free property, for any function \( \bm{v} = \{\bm{v}_0, v_b\} \in \bm{V}_h \), we  define a weak gradient and a weak divergence by substituting the normal component in the integration by parts formula with the enriched component \( v_b \).

\begin{definition}\label{def1}
For $\bm{v} \in \bm V_h$, the weak gradient  operator is defined as  $\nabla_{w} \bm{v} \in \left[P_0(\mathcal T_h)\right]^{d \times d}$ satisfying
\begin{equation}\label{eq:grad}
\left(\nabla_{w} \bm{v}, \bm\tau\right)_T = \left\langle v_b \bm{n}_e \cdot \bm{n}, \bm{n} \cdot \bm\tau \cdot \bm{n} \right\rangle_{\partial T} + \left\langle \bm{n} \times \bm{v}_0, \bm{n} \times \bm\tau \cdot \bm{n} \right\rangle_{\partial T},\quad \forall \bm\tau \in \left[P_0(T)\right]^{d \times d},\ T \in \mathcal T_h,
\end{equation}
and the weak divergence operator is defined 
as $\nabla_{w} \cdot \bm{v} \in P_0(\mathcal T_h)$ satisfying
\begin{equation}\label{eq:div}
\left(\nabla_{w} \cdot \bm{v}, \varphi\right)_T = \left\langle v_b \bm{n}_e \cdot \bm{n}, \varphi \right\rangle_{\partial T},\quad \forall \varphi \in P_0(T),\ T \in \mathcal T_h,
\end{equation}
where $\bm n$ represents the unit outward normal vector to $\partial T$ and ${\bm n}_e$ represents the assigned unit normal vector to the edge or face $e \subset \partial T$.
\end{definition}
\noindent We then denote
\begin{align}\label{weakeps}
\bm{\epsilon}_w(\bm{v})=\frac{1}{2}\left(\nabla_w \bm{v}+(\nabla_w\bm{v})^{\mathrm{T}}\right)\text{ and }
\bm{\sigma}_w(\bm{v}):=2 \mu \bm{\epsilon}_w(\bm{v})+\lambda(\nabla_w \cdot \bm{v}) \mathbf{I}.
\end{align}

Let $\bm Q_0$ denote the Scott-Zhang type interpolation operator \cite{scottzhang} from $\left[L^2(\Omega)\right]^d$ onto the space of CG and $Q_b$ the $L^2$ projection from $L^2(\mathcal E_h)$ onto the space $\operatorname{DG}$.
For any $\bm v\in\left[L^2(\Omega)\right]^d$, define $v_n\in L^2(\mathcal E_h)$ by
\begin{align}
    v_n|_e = \bm v|_e\cdot \bm n_e \text{ for any }e\in\mathcal E_h.\label{vdotn}
\end{align}
We then define $\bm Q_h\bm v$ as
\[\bm Q_h\bm v =\{\bm Q_0\bm v,Q_b v_n\}.\]

According to  Definition \ref{def1} and \eqref{weakeps}, for $\bm v=\{\bm v_0,v_b\}\in\bm V_h$, we have
\begin{align}
    \left(\bm{\epsilon}_w (\bm{v}), \bm\tau\right)_T &= (\bm{\epsilon}(\bm{v}_0), \bm \tau)_T- \big\langle (Q_bv_{0,n}-v_b) \bm n_e \cdot \bm n , \bm n \cdot \bm\tau \cdot \bm n \big\rangle_{\partial T}, \quad \forall \bm\tau \in \left[P_0(T)\right]^{d \times d}_{\operatorname{sym}},\label{diff-eps}\\
\left(\nabla_{w} \cdot \bm{v}, \varphi\right)_T & = \left(\nabla \cdot \bm{v}_0, \varphi\right)_T -\left\langle (Q_bv_{0,n}-v_b) \bm{n}_e \cdot \bm{n}, \varphi \right\rangle_{\partial T},\quad \forall \varphi \in P_0(T),\label{diff-div}
\end{align}
where $v_{0,n}$ follows the definition given in \eqref{vdotn}.

We now define the EG space with vanishing Dirichlet boundary condition as
$$
\bm V_h^0=\left\{{\bm v}_h=\left\{{\bm v}_0, v_b\right\} \in \bm V_h: \ {\bm v}_0=0,\ v_b=0 \text { on } \partial \Omega\right\}.
$$
Then we present the PF\&LF-EG method for the linear elasticity problem with  vanishing Dirichlet boundary condition as follows.
\begin{algorithm}
\caption{A  PF\&LF-EG method for vanishing Dirichlet boundary condition}\label{eg}
\begin{algorithmic}\STATE
Find $\bm u_h=\left\{\bm u_0,u_b\right\} \in \bm V_h^0$ such that
\begin{align}\label{eq:al1}
\bm a\left(\bm u_h, \bm v\right)=(\bm f, \bm v_0),\quad \forall \bm v=\{\bm v_0,v_b\} \in \bm V_h^{0}, 
\end{align}
where
$$
\begin{aligned}
\bm a (\bm w, \bm v) & =2 \mu \sum_{T \in \mathcal{T}_h}\left(\bm \epsilon_w(\bm w), \bm \epsilon_w(\bm v)\right)_T +\lambda \sum_{T \in \mathcal{T}_h}\left(\nabla_w \cdot \bm w, \nabla_w \cdot \bm v\right)_T+s(\bm w, \bm v),
\end{aligned}
$$
with $s(\bm w, \bm v)= \sum_{T \in \mathcal{T}_h} h_T^{-1}\left\langle Q_b w_{0,n} -w_b, Q_b v_{0,n}-v_b\right\rangle_{\partial T}$. 
\end{algorithmic}
\end{algorithm}
\begin{remark}
We also consider the mixed boundary conditions
$$
\bm{u}=\bm{u}_{D} \  \text { on } \Gamma_D\ \text{ and 
 }\ \bm{\sigma}(\bm{u}) \bm{n} =\bm{g}  
 \ \text { on } \Gamma_N,
$$
where $\Gamma_D$ and $\Gamma_N$ are the Dirichlet and Neumann boundaries satisfying $\partial \Omega=\Gamma_D \cup \Gamma_N$ and $\Gamma_D \cap \Gamma_N=\emptyset$, $\bm{g}$ is the traction posed to $\Gamma_N$, and $\bm{u}_D$ is a given function. We define the EG space with boundary conditions
$$
\bm{V}_h^{0, D}=\left\{\bm{v}_h=\left\{\bm{v}_0, v_b\right\} \in \bm{V}_h: \bm{v}_0=0, v_b=0 \text { on } \Gamma_D\right\}.
$$
The PF$\&$LF-EG method is to find $\bm u_h=\left\{\bm u_0,u_b\right\} \in \bm V_h$ with $u_b = Q_b u_{D,n}$ and $ \bm u_0=
\Pi_h\bm u_D$ on $\Gamma_D$ such that
\begin{align}\label{eq:al2}
\bm a\left(\bm u_h, \bm v\right)=\bm F(\bm v),\quad \forall \bm v \in \bm V_h^{0,D},
\end{align}
where
$$
\begin{aligned}
\bm F(\bm v) & =(\bm f, \bm v_0)+\langle \bm g\cdot \bm n_e, v_b\rangle_{\Gamma_N}+\langle\bm n \times \bm g , \bm n \times \bm v_0\rangle_{\Gamma_N}
\end{aligned}
$$
and $\Pi_h$ represents the first-order Lagrange interpolation.

For the pure traction problem $(\Gamma_D=\emptyset)$, we assume the following compatibility condition to hold
\[(\bm f,\bm v) + \langle \bm g,\bm v\rangle_{\partial\Omega}=0, \quad\forall \bm v\in \operatorname{RM},\]
where $\operatorname{RM}=\{\bm a + b\bm x^{\perp}: \bm a\in \mathbb R^{2},b\in\mathbb R\}$ with $\bm x^{\perp}=(x_2,-x_1)^{\mathrm T}$ in 2D and $\operatorname{RM}=\{\bm a + \bm b\times\bm x:\bm a,\bm b\in \mathbb R^{3}\}$  in 3D. In this case, we seek the numerical solution $\bm u_h\in \widehat{\bm V}_h$ which is defined by
    \[\widehat{\bm V}_h=\{\bm u_h=\{\bm u_0,u_b\}\in \bm V_h:\int_{\Omega}\bm u_0 = 0\text{ and }\int_{\Omega}\nabla\times\bm u_0 = 0\}.\]
\end{remark}


\section{Theoretical analysis}\label{sec:Theore}
In this section, we perform the theoretical analysis for the proposed
PF\&LF-EG method in Algorithm \ref{eg}. 
Let $\mathcal Q_h$ and $\mathbb Q_h$ be two $L^2$ projections onto $P_{0}(\mathcal T_h)$ and $\left[P_{0}(\mathcal T_h)\right]^{d \times d}$, respectively.

\subsection{Well-posedness}
For $\bm v \in \bm V_h^0$, we define
\begin{equation}\label{md-norm}
|\!|\!|\bm v |\!|\!|^2=\sum_{T \in \mathcal{T}_h}\left\|\bm{\epsilon}_w (\bm v) \right\|_T^2+\sum_{T \in \mathcal{T}_h} h_T^{-1}\left\|Q_b v_{0,n} - v_b\right\|_{\partial T}^2,
\end{equation}
\begin{equation}\label{norm}
\|\bm v\|_{1, h}^2=\sum_{T \in \mathcal{T}_h}\|\bm{\epsilon}(\bm v_0)\|_T^2+\sum_{T \in \mathcal{T}_h} h_T^{-1}\left\|Q_b v_{0,n} - v_b\right\|_{\partial T}^2.
\end{equation}
According to the Korn's inequality \cite{genKorn}, for $\bm v_0\in \operatorname{CG}$ with $\bm v_0 =0 $ on $\partial\Omega$, there exists a constant $C>0$ such that
\begin{equation}
\label{generalizedkorn}
   \|\nabla\boldsymbol{v}_0\|\leq
C\|\boldsymbol{\epsilon}(\boldsymbol{v}_0)\|.
\end{equation}
Assume $\bm v\in\bm V_h^0$ satisfies $\|\bm v\|_{1, h}=0$.  From \eqref{generalizedkorn}, we deduce $\bm v_0=0$. Consequently \[\|\bm v\|^2_{1, h}=\sum_{T \in \mathcal{T}_h} h_T^{-1}\left\|Q_b v_{0,n} - v_b\right\|_{\partial T}^2=\sum_{T \in \mathcal{T}_h} h_T^{-1}\left\| v_b\right\|_{\partial T}^2=0,\] which yields $v_b=0$. Therefore,  $\|\cdot\|_{1, h}$ defines a norm in $\bm V_h^0$. 
To show that $|\!|\!|\cdot |\!|\!|$ also defines a norm in $\bm V_h^0$, we present the following lemma.
\begin{lemma}\label{lem:2}
For any $\bm v \in \bm V_h^0$, there are positive constants $C_1$ and $C_2$ independent of $h$ such that
\begin{equation}\label{norm equi}
C_1 \|\bm v\|_{1, h} \leq |\!|\!|\bm v |\!|\!| \leq C_2 \|\bm v\|_{1, h} .
\end{equation}
\end{lemma}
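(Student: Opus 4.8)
The plan is to exploit the fact that both norms in \eqref{norm equi} share the \emph{identical} stabilization term $\sum_{T\in\mathcal T_h} h_T^{-1}\|Q_b v_{0,n}-v_b\|_{\partial T}^2$, so that the asserted equivalence reduces to comparing $\sum_{T}\|\bm\epsilon_w(\bm v)\|_T^2$ with $\sum_{T}\|\bm\epsilon(\bm v_0)\|_T^2$ up to this common quantity. The bridge between the two is identity \eqref{diff-eps}. Since $\bm v_0|_T\in[P_1(T)]^d$, the strain $\bm\epsilon(\bm v_0)$ is a \emph{constant symmetric} tensor on $T$, and $\bm\epsilon_w(\bm v)\in[P_0(T)]^{d\times d}_{\operatorname{sym}}$ by Definition \ref{def1}. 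Hence \eqref{diff-eps}, which holds for every symmetric constant test tensor $\bm\tau$, characterizes $\bm\epsilon_w(\bm v)$ completely and lets me write, on each element,
\[
\bm\epsilon_w(\bm v)=\bm\epsilon(\bm v_0)-\bm R_T,
\]
where the correction $\bm R_T\in[P_0(T)]^{d\times d}_{\operatorname{sym}}$ is the Riesz representative of the boundary functional $\bm\tau\mapsto\langle (Q_b v_{0,n}-v_b)\,\bm n_e\cdot\bm n,\ \bm n\cdot\bm\tau\cdot\bm n\rangle_{\partial T}$.

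The crux is to control $\bm R_T$ by the stabilization contribution. I would test its defining relation with $\bm\tau=\bm R_T$ to obtain
\[
\|\bm R_T\|_T^2=\big\langle (Q_b v_{0,n}-v_b)\,\bm n_e\cdot\bm n,\ \bm n\cdot\bm R_T\cdot\bm n\big\rangle_{\partial T}\le \|Q_b v_{0,n}-v_b\|_{\partial T}\,\|\bm n\cdot\bm R_T\cdot\bm n\|_{\partial T},
\]
using $|\bm n_e\cdot\bm n|=1$ on each face of the simplex. Because $\bm R_T$ is constant on $T$ and its normal--normal component is bounded in modulus by its Frobenius norm, a pure geometric scaling argument (quasi-uniformity gives $|T|\approx h_T^{d}$ and $|\partial T|\approx h_T^{d-1}$, with $\bm n$ constant on each face) yields the inverse-trace-type bound $\|\bm n\cdot\bm R_T\cdot\bm n\|_{\partial T}\le C\,h_T^{-1/2}\|\bm R_T\|_T$. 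Combining the two displays gives the key estimate
\[
\|\bm R_T\|_T^2\le C\,h_T^{-1}\,\|Q_b v_{0,n}-v_b\|_{\partial T}^2,
\]
so that $\sum_T\|\bm R_T\|_T^2$ is majorized by the common stabilization term.

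With this estimate in hand, both inequalities follow from the triangle inequality applied to $\bm\epsilon_w(\bm v)=\bm\epsilon(\bm v_0)-\bm R_T$. For the upper bound I use $\|\bm\epsilon_w(\bm v)\|_T^2\le 2\|\bm\epsilon(\bm v_0)\|_T^2+2\|\bm R_T\|_T^2$; summing over $T$, invoking the key estimate, and adding the common stabilization term to both sides gives $|\!|\!|\bm v|\!|\!|^2\le C_2^2\|\bm v\|_{1,h}^2$. For the lower bound the symmetric estimate $\|\bm\epsilon(\bm v_0)\|_T^2\le 2\|\bm\epsilon_w(\bm v)\|_T^2+2\|\bm R_T\|_T^2$, treated in the same way, yields $C_1\|\bm v\|_{1,h}\le|\!|\!|\bm v|\!|\!|$.

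The main obstacle I anticipate is the scaling step bounding $\|\bm n\cdot\bm R_T\cdot\bm n\|_{\partial T}$ by $C\,h_T^{-1/2}\|\bm R_T\|_T$; everything else is purely algebraic. This step relies essentially on quasi-uniformity of $\mathcal T_h$ and on $\bm n$ being piecewise constant over $\partial T$, which is exactly what makes the simplicial setting convenient. I note that the Korn inequality \eqref{generalizedkorn} is \emph{not} needed here, as it was already used to establish that $\|\cdot\|_{1,h}$ is a norm; Lemma \ref{lem:2} is a mesh-dependent equivalence whose constants $C_1,C_2$ depend only on the shape-regularity of $\mathcal T_h$.
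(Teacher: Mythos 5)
Your proof is correct and is essentially the paper's argument: both rest on the identity \eqref{diff-eps} for constant symmetric test tensors together with the elementwise scaling bound $\|\bm n\cdot\bm\tau\cdot\bm n\|_{\partial T}\le C h_T^{-1/2}\|\bm\tau\|_T$, which is exactly what the paper invokes as the trace inequality. The only difference is packaging: you test \eqref{diff-eps} once with the difference $\bm R_T=\bm\epsilon(\bm v_0)-\bm\epsilon_w(\bm v)$ and finish with the triangle inequality, whereas the paper tests separately with $\bm\tau=\bm\epsilon(\bm v_0)$ and $\bm\tau=-\bm\epsilon_w(\bm v)$ to get the two inequalities directly.
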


\begin{proof}
For $\bm\tau\in [P_0(\mathcal T_h)]_{\operatorname{sym}}^{d\times d} $, it follows from \eqref{diff-eps} that
$$
\begin{aligned}
(\bm \epsilon (\bm v_0),\bm\tau)_{T}-(\bm \epsilon_w (\bm v ),\bm\tau)_{T}= \big\langle (Q_bv_{0,n}-v_b) \bm n_e \cdot \bm n , \bm n \cdot \bm\tau \cdot \bm n \big\rangle_{\partial T}.\\
\end{aligned}
$$
By the definition of $Q_b$ and the trace inequality, we have
\begin{align}
(\bm \epsilon (\bm v_0),\bm\tau)_{T}-(\bm \epsilon_w (\bm v ),\bm\tau)_{T}\leq Ch_T^{-\frac{1}{2}}\|Q_b v_{0,n}-v_b\|_{\partial T}\|\bm\tau\|_{T}.\label{lemma2-1}
\end{align}
Letting $\bm\tau=\bm \epsilon (\bm v_0)$ in \eqref{lemma2-1} and summing up over $T\in\mathcal T_h$, we obtain
\begin{align}\label{ineq1}
    \Big(\sum_{T\in\mathcal T_h}\|\bm \epsilon (\bm v_0)\|_{T}^2\Big)^{1/2} \leq C |\!|\!| \bm v |\!|\!|.
\end{align}
Similarly, letting $\bm\tau=-\bm \epsilon_w (\bm v_0)$ in \eqref{lemma2-1}, we obtain
\begin{align}\label{ineq2}
\Big(\sum_{T\in\mathcal T_h}\|\bm \epsilon_w (\bm v)\|_{T}^2\Big)^{1/2}\leq C \|\bm v\|_{1,h}.
\end{align}
The above two inequalities \eqref{ineq1} and \eqref{ineq2} lead to \eqref{norm equi}.
\end{proof}

\begin{theorem}
The Algorithm \ref{eg} has a unique solution.    
\end{theorem}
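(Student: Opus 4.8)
The plan is to observe that \eqref{eq:al1} is a square linear system posed on the finite-dimensional space $\bm V_h^0$, so existence and uniqueness are equivalent and both follow once the bilinear form $\bm a(\cdot,\cdot)$ is shown to be coercive with respect to a genuine norm on $\bm V_h^0$. The natural candidate is $|\!|\!|\cdot|\!|\!|$, and this is already available as a norm: the discussion preceding Lemma \ref{lem:2} establishes (via Korn's inequality \eqref{generalizedkorn}) that $\|\cdot\|_{1,h}$ is a norm on $\bm V_h^0$, and the equivalence \eqref{norm equi} of Lemma \ref{lem:2} then transfers this property to $|\!|\!|\cdot|\!|\!|$.

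The key step is coercivity, which I expect to be essentially immediate from the very structure of $\bm a$. Setting $\bm w=\bm v$ in the definition gives
$$\bm a(\bm v,\bm v)=2\mu\sum_{T\in\mathcal T_h}\|\bm\epsilon_w(\bm v)\|_T^2+\lambda\sum_{T\in\mathcal T_h}\|\nabla_w\cdot\bm v\|_T^2+s(\bm v,\bm v).$$
Since $\lambda>0$, the middle term is nonnegative and may be discarded. Moreover, by definition $s(\bm v,\bm v)=\sum_{T\in\mathcal T_h}h_T^{-1}\|Q_b v_{0,n}-v_b\|_{\partial T}^2$ coincides exactly with the second term of $|\!|\!|\bm v|\!|\!|^2$, while $2\mu\sum_T\|\bm\epsilon_w(\bm v)\|_T^2$ controls the first term from below because $\mu>\mu_1>0$. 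Hence
$$\bm a(\bm v,\bm v)\ge 2\mu\sum_{T\in\mathcal T_h}\|\bm\epsilon_w(\bm v)\|_T^2+s(\bm v,\bm v)\ge\min(2\mu_1,1)\,|\!|\!|\bm v|\!|\!|^2.$$

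Finally I would conclude existence and uniqueness from the trivial-kernel argument. If $\bm u_h\in\bm V_h^0$ solves the homogeneous problem, that is $\bm a(\bm u_h,\bm v)=0$ for all $\bm v\in\bm V_h^0$, then choosing $\bm v=\bm u_h$ and applying the coercivity bound yields $|\!|\!|\bm u_h|\!|\!|=0$, so $\bm u_h=0$ since $|\!|\!|\cdot|\!|\!|$ is a norm. Thus the linear operator induced by $\bm a$ on the finite-dimensional space $\bm V_h^0$ has trivial kernel and is therefore bijective, which delivers a unique $\bm u_h$ solving \eqref{eq:al1} for the right-hand side $\bm v\mapsto(\bm f,\bm v_0)$. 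I do not anticipate any genuine obstacle here: the only mildly delicate point is that $|\!|\!|\cdot|\!|\!|$ must be a true norm rather than a seminorm, and this has already been secured by Lemma \ref{lem:2} and the Korn-inequality argument preceding it; the coercivity itself is a direct consequence of the nonnegativity of the $\lambda$-term and the fact that the stabilization $s$ reproduces the boundary part of $|\!|\!|\cdot|\!|\!|^2$.
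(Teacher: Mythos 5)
Your proposal is correct and follows essentially the same route as the paper: the paper's proof likewise reduces to showing the homogeneous system has only the trivial solution, using that $\bm a(\bm u_h,\bm u_h)=0$ forces $|\!|\!|\bm u_h|\!|\!|=0$ and that $|\!|\!|\cdot|\!|\!|$ is a norm on $\bm V_h^0$ (via Lemma \ref{lem:2} and the Korn-inequality argument). You merely spell out explicitly the coercivity constant $\min(2\mu_1,1)$ and the nonnegativity of the $\lambda$-term, which the paper leaves implicit.
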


\begin{proof}
 We only need to demonstrate that $\bm u_h=0$ is the unique solution of \eqref{eq:al1} when $\bm f=0$. Since $|\!|\!|\cdot|\!|\!|$ defines a norm, $(\bm f,  \bm u_0)=0$ leads to $|\!|\!| \bm u_h |\!|\!| =0$, which implies $\bm u_h=0$.
\end{proof}
\subsection{Error equations}
In this subsection, we derive error equations for discrete displacement, in preparation for the convergence analysis in the subsequent subsection. 
Let $\bm{u}_h=\left\{\bm{u}_0, u_b\right\} \in \bm{V}_h^0$ be the discrete solution of the PF\&LF-EG method \eqref{eq:al1} and $\bm{u}$ be  the exact solution of the linear elasticity problem \eqref{eq:l1}. 
As previously defined, the projection of $\bm{u}$ to the finite element space $\bm{V}_h$ is given by $
\bm{Q}_h \bm{u}=\left\{\bm{Q}_0 \bm{u}, Q_b u_n\right\}.$
The error $\bm{e}_h$ is defined as
$$
\bm{e}_h=\left\{\bm{e}_0, e_b\right\}=\bm{Q}_h \bm{u}-\bm u_h=\left\{\bm{Q}_0 \bm{u}-\bm{u}_0, Q_b u_n-u_b\right\} .
$$
We first show the following (quasi-)commutativity properties.
\begin{lemma}
For $\bm{v} \in\left[H^1(\Omega)\right]^d$ and $\bm{w} \in H(\operatorname{div}; \Omega)$, the interpolation or projection operators $\bm Q_h, \mathbb {Q}_h$, and $\mathcal Q_h$ satisfy the following (quasi-)commutativity properties 
\begin{align}\label{eq:Qhg}
\left(\bm \epsilon_w\left(\bm Q_h \bm{v}\right), \bm\tau_h \right)_T& =\left(\mathbb Q_h\bm \epsilon( \bm{v}), \bm\tau_h\right)_T+\left\langle \bm{n} \times (\bm Q_0 \bm{v}  - \bm{v} ),\bm{n} \times  \bm\tau_h \cdot \bm{n} \right\rangle_{\partial T}\quad  \forall \bm\tau_h \in\left[P_{0}(\mathcal T_h)\right]^{d \times d}_{\operatorname{sym}},\\
\left(\nabla_w \cdot\left(\bm Q_h \bm{w}\right),\varphi_h \right)_T & =\left(\mathcal Q_h(\nabla \cdot \bm{w}),\varphi_h \right)_T\quad\forall \varphi_h \in P_{0}(\mathcal T_h).\label{eq:Qhd}
\end{align}
\end{lemma}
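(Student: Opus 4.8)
The plan is to verify each identity by unfolding the definitions of the weak operators in Definition \ref{def1} on $\bm Q_h\bm v$ and $\bm Q_h\bm w$, and then matching the resulting boundary integrals to the desired volume terms through integration by parts. Since both $\bm\tau_h$ and $\varphi_h$ are piecewise constant, I would first record that $(\mathbb Q_h\bm\epsilon(\bm v),\bm\tau_h)_T=(\bm\epsilon(\bm v),\bm\tau_h)_T$ and $(\mathcal Q_h(\nabla\cdot\bm w),\varphi_h)_T=(\nabla\cdot\bm w,\varphi_h)_T$, so the projections $\mathbb Q_h,\mathcal Q_h$ may be discarded and the two claims reduce to identities relating the weak operators applied to $\bm Q_h$ with the strong operators applied directly.

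I would tackle the divergence identity \eqref{eq:Qhd} first, as it is the cleaner one. Writing $\bm Q_h\bm w=\{\bm Q_0\bm w,Q_bw_n\}$ and inserting it into \eqref{eq:div} gives $(\nabla_w\cdot(\bm Q_h\bm w),\varphi_h)_T=\langle (Q_bw_n)\,\bm n_e\cdot\bm n,\varphi_h\rangle_{\partial T}$. The two facts I would exploit are: (i) on each $e\subset\partial T$ the outward normal satisfies $\bm n=(\bm n_e\cdot\bm n)\bm n_e$ with $\bm n_e\cdot\bm n=\pm1$, so that $(\bm n_e\cdot\bm n)\,w_n=(\bm n_e\cdot\bm n)(\bm w\cdot\bm n_e)=\bm w\cdot\bm n$ by \eqref{vdotn}; and (ii) since $Q_b$ is the $L^2$ projection onto edgewise constants and both $\varphi_h$ and $\bm n_e\cdot\bm n$ are constant on $e$, we have $\int_e Q_bw_n\,\d s=\int_e w_n\,\d s$. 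Combining these, the boundary term equals $\varphi_h\sum_{e\subset\partial T}\int_e\bm w\cdot\bm n\,\d s=\varphi_h\int_{\partial T}\bm w\cdot\bm n\,\d s$, and the divergence theorem (valid since $\bm w\in H(\operatorname{div};\Omega)$) turns this into $(\nabla\cdot\bm w,\varphi_h)_T$, which is the claim.

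For the strain identity \eqref{eq:Qhg}, I would proceed analogously but keep track of the tangential contribution. Because $\bm\tau_h$ is symmetric, $(\bm\epsilon_w(\bm Q_h\bm v),\bm\tau_h)_T=(\nabla_w(\bm Q_h\bm v),\bm\tau_h)_T$, and \eqref{eq:grad} splits this into a normal--normal piece $\langle (Q_bv_n)\,\bm n_e\cdot\bm n,\bm n\cdot\bm\tau_h\cdot\bm n\rangle_{\partial T}$ and a tangential piece $\langle \bm n\times\bm Q_0\bm v,\bm n\times\bm\tau_h\cdot\bm n\rangle_{\partial T}$. The normal--normal piece is handled exactly as in the divergence case: facts (i) and (ii) above (now with $\bm n\cdot\bm\tau_h\cdot\bm n$ constant on $e$ in place of $\varphi_h$) rewrite it as $\langle (\bm v\cdot\bm n)(\bm n\cdot\bm\tau_h\cdot\bm n)\rangle_{\partial T}$. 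On the other hand, integration by parts against the constant (hence divergence-free) tensor $\bm\tau_h$ gives $(\bm\epsilon(\bm v),\bm\tau_h)_T=\langle\bm v,\bm\tau_h\cdot\bm n\rangle_{\partial T}$, and decomposing $\bm v$ and $\bm\tau_h\cdot\bm n$ into normal and tangential components via the identity $(\bm n\times\bm a)\cdot(\bm n\times\bm b)=\bm a\cdot\bm b-(\bm a\cdot\bm n)(\bm b\cdot\bm n)$ for unit $\bm n$ yields $(\bm\epsilon(\bm v),\bm\tau_h)_T=\langle(\bm v\cdot\bm n)(\bm n\cdot\bm\tau_h\cdot\bm n)\rangle_{\partial T}+\langle\bm n\times\bm v,\bm n\times\bm\tau_h\cdot\bm n\rangle_{\partial T}$. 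Subtracting the two expressions, the normal--normal terms cancel and the tangential terms combine into $\langle\bm n\times(\bm Q_0\bm v-\bm v),\bm n\times\bm\tau_h\cdot\bm n\rangle_{\partial T}$, which is precisely \eqref{eq:Qhg}.

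The routine pieces---the $L^2$-projection and divergence-theorem manipulations---are straightforward. The step I expect to require the most care is the normal/tangential decomposition of $\langle\bm v,\bm\tau_h\cdot\bm n\rangle_{\partial T}$ together with the cross-product identity, which must be stated so as to cover both the scalar interpretation of $\bm n\times(\cdot)$ in 2D and the vector interpretation in 3D; verifying that the normal--normal parts produced by the two different routes (the weak-operator definition in \eqref{eq:grad} versus integration by parts) coincide is the crux that forces the residual correction term to depend only on $\bm Q_0\bm v-\bm v$ rather than on the enriched component $v_b$.
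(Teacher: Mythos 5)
Your proposal is correct and follows essentially the same route as the paper: unfold the weak-operator definitions on $\bm Q_h\bm v$ and $\bm Q_h\bm w$, use the $L^2$-projection property of $Q_b$ against edgewise-constant data (with $\bm n_e\cdot\bm n=\pm 1$) to replace $Q_bv_n$ by $\bm v\cdot\bm n$, and convert $\langle\bm v,\bm\tau_h\cdot\bm n\rangle_{\partial T}$ back to $(\nabla\bm v,\bm\tau_h)_T$ by integration by parts against the constant tensor. The only cosmetic difference is that the paper adds and subtracts the tangential term $\langle\bm n\times\bm v,\bm n\times\bm\tau_h\cdot\bm n\rangle_{\partial T}$ and recombines, while you perform the normal/tangential decomposition of the integrated-by-parts boundary term and subtract -- the same computation read in the opposite direction.
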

\begin{proof} 
For any $\bm\tau_h  \in\left[P_{0}(\mathcal T_h)\right]^{d\times d}_{\operatorname{sym}}$, it follows from  \eqref{eq:grad}, the definitions of $\bm Q_h$ and $\mathbb Q_h$, as well as integration by parts, that
$$
\begin{aligned}
\left(\nabla_w\left(\bm Q_h \bm v\right),\bm\tau_h \right)_T& =\left\langle Q_b v_n\bm n_e\cdot\bm n , \bm{n} \cdot \bm\tau_h  \cdot \bm{n}\right\rangle_{\partial T}+\left\langle\bm{n}\times \bm Q_0 \bm v ,\bm{n} \times \bm\tau_h \cdot \bm{n}\right\rangle_{\partial T}\\
& =\left\langle \bm v \cdot \bm{n}, \bm{n} \cdot \bm\tau_h  \cdot \bm{n}\right\rangle_{\partial T}+\left\langle \bm{n} \times \bm Q_0 \bm v ,\bm{n} \times \bm\tau_h  \cdot \bm{n} \right\rangle_{\partial T}\\
&\ \ \  +\langle \bm{n} \times \bm v, \bm{n} \times \bm\tau_h \cdot \bm{n}\rangle_{\partial T}-\langle \bm{n} \times \bm v, \bm{n} \times \bm\tau_h  \cdot \bm{n}\rangle_{\partial T} \\
& =\left(\nabla \bm v, \bm\tau_h \right)_T +\left\langle \bm{n} \times ( \bm Q_0 \bm v  - \bm v ), \bm{n} \times \bm\tau_h  \cdot \bm{n} \right\rangle_{\partial T}\\
& =\left(\mathbb Q_h(\nabla \bm v), \bm\tau_h \right)_T+\left\langle \bm{n} \times ( \bm Q_0 \bm v  - \bm v), \bm{n} \times \bm\tau_h  \cdot \bm{n} \right\rangle_{\partial T},
\end{aligned}
$$
which leads to \eqref{eq:Qhg}.

Similarly, for any $\varphi_h \in P_{0}(\mathcal T_h)$, applying \eqref{eq:div}, the definitions of $Q_b$ and $\mathcal Q_h$, and integration by parts, we derive
\begin{align*}
\left(\nabla_w \cdot\left(\bm Q_h \bm w\right), \varphi_h \right)_T=\left\langle Q_bw_n\bm n_e\cdot\bm n, \varphi_h \right\rangle_{\partial T}
=\langle\bm w \cdot \bm{n}, \varphi_h \rangle_{\partial T}  =(\nabla \cdot \bm w, \varphi_h)_T =\left(\mathcal Q_h(\nabla \cdot \bm w), \varphi_h \right)_T .
\end{align*}
This completes the proof. 
\end{proof}


\begin{lemma}
 For any $\bm v \in \bm V_h^0$, the following error equation holds,
\begin{align}\label{erreq}
    \bm a\left(\bm e_h, \bm{v}\right)=2 \mu \ell_1 (\bm u, \bm v)+2 \mu\ell_2(\bm u, \bm v)+ \lambda\ell_3(\bm{u}, \bm{v})+ s\left(\bm Q_h \bm{u}, \bm{v}\right),
\end{align}
where
$$
\begin{aligned}
\ell_1 (\bm u, \bm v)& =\sum_{T \in \mathcal{T}_h}\big\langle(Q_bv_{0,n}-v_b)\bm{n}_e \cdot \bm{n}, \bm{n} \cdot \left(\bm \epsilon (\bm u)-\mathbb{Q}_h\bm \epsilon (\bm u) \right)\cdot \bm{n}\big\rangle_{\partial T}, \\
\ell_2(\bm u, \bm v)& = \sum_{T \in \mathcal{T}_h}\big\langle \bm{n} \times (\bm Q_0 \bm u  - \bm u ) , \bm{n} \times \bm \epsilon_w (\bm v ) \cdot \boldsymbol{n}\big\rangle_{\partial T},\\
\ell_3(\bm u , \bm v)&  =\sum_{T \in \mathcal{T}_h}\big\langle(Q_bv_{0,n}-v_b) \bm{n}_e \cdot \bm{n}, \nabla\cdot\bm u-\mathcal Q_h\nabla\cdot\bm u\big\rangle_{\partial T}.
\end{aligned}
$$
\end{lemma}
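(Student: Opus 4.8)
The plan is to start from linearity, writing $\bm a(\bm e_h,\bm v)=\bm a(\bm Q_h\bm u,\bm v)-\bm a(\bm u_h,\bm v)$, and to use the discrete problem \eqref{eq:al1} to replace $\bm a(\bm u_h,\bm v)$ by $(\bm f,\bm v_0)$. The whole task then reduces to proving the identity
\[
\bm a(\bm Q_h\bm u,\bm v)=(\bm f,\bm v_0)+2\mu\ell_1(\bm u,\bm v)+2\mu\ell_2(\bm u,\bm v)+\lambda\ell_3(\bm u,\bm v)+s(\bm Q_h\bm u,\bm v).
\]
I would treat the three pieces of $\bm a(\bm Q_h\bm u,\bm v)$ separately, leaving the stabilization $s(\bm Q_h\bm u,\bm v)$ untouched since it already appears verbatim on the right.

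For the strain term, since $\bm\epsilon_w(\bm v)\in[P_0(\mathcal T_h)]^{d\times d}_{\operatorname{sym}}$ I take it as the test tensor in the commutativity identity \eqref{eq:Qhg}, which produces $(\mathbb Q_h\bm\epsilon(\bm u),\bm\epsilon_w(\bm v))_T$ plus exactly the integrand of $\ell_2$. Because $\bm\epsilon_w(\bm v)$ is piecewise constant the projection may be dropped, and applying \eqref{diff-eps} with $\bm\tau=\mathbb Q_h\bm\epsilon(\bm u)$ rewrites the bulk term as $(\bm\epsilon(\bm v_0),\bm\epsilon(\bm u))_T$ — here I use that $\bm v_0$ is piecewise linear, so $\bm\epsilon(\bm v_0)$ is piecewise constant and the projections collapse — minus a boundary term involving $\bm n\cdot\mathbb Q_h\bm\epsilon(\bm u)\cdot\bm n$. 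Adding and subtracting $\bm n\cdot\bm\epsilon(\bm u)\cdot\bm n$ in that boundary term splits off precisely $\ell_1$ and leaves a residual carrying the true strain $\bm\epsilon(\bm u)$. The divergence term is handled identically via \eqref{eq:Qhd} and \eqref{diff-div}, yielding $(\nabla\cdot\bm u,\nabla\cdot\bm v_0)_T$ plus $\ell_3$ plus a residual boundary term carrying $\nabla\cdot\bm u$.

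Collecting the bulk contributions, the leading terms combine into $2\mu(\bm\epsilon(\bm u),\bm\epsilon(\bm v_0))+\lambda(\nabla\cdot\bm u,\nabla\cdot\bm v_0)$, which equals $(\bm f,\bm v_0)$ by the primal weak form \eqref{primal-form}; this is legitimate because $\bm v_0\in\operatorname{CG}$ vanishes on $\partial\Omega$, hence $\bm v_0\in[H_0^1(\Omega)]^d$. Using $\bm n\cdot(\lambda(\nabla\cdot\bm u)\mathbf I)\cdot\bm n=\lambda\,\nabla\cdot\bm u$, the two residual boundary terms merge into the single edge sum
\[
\sum_{T\in\mathcal T_h}\big\langle(Q_bv_{0,n}-v_b)\,\bm n_e\cdot\bm n,\ \bm n\cdot\bm\sigma(\bm u)\cdot\bm n\big\rangle_{\partial T},
\]
and what remains is to show that this sum vanishes.

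This cancellation is the main obstacle. The argument is that the scalar factor $Q_bv_{0,n}-v_b$ is single-valued on each $e\in\mathcal E_h$ (since $\bm v_0$ is continuous and $v_b\in\operatorname{DG}$ lives on edges), while the normal–normal component $\bm n\cdot\bm\sigma(\bm u)\cdot\bm n=\bm n_e\cdot\bm\sigma(\bm u)\cdot\bm n_e$ is orientation-independent and continuous across interior edges, because the $H^2$-regularity \eqref{regularity} gives $\bm\sigma(\bm u)\in[H^1(\Omega)]^{d\times d}$ (equivalently, traction continuity from $\bm\sigma(\bm u)\in H(\operatorname{div})$). Since the factor $\bm n_e\cdot\bm n$ changes sign between the two elements sharing $e$, the two one-sided contributions cancel on every interior edge; on boundary edges $v_b=0$ and $v_{0,n}=\bm v_0\cdot\bm n_e=0$ (as $\bm v_0=0$ on $\partial\Omega$), so those terms vanish outright. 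This establishes the displayed identity, and subtracting $(\bm f,\bm v_0)=\bm a(\bm u_h,\bm v)$ yields \eqref{erreq}.
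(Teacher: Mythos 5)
Your proposal is correct, and its skeleton is the same as the paper's: reduce to $\bm a(\bm Q_h\bm u,\bm v)-(\bm f,\bm v_0)$, rewrite the strain and divergence terms with \eqref{eq:Qhg}/\eqref{diff-eps} and \eqref{eq:Qhd}/\eqref{diff-div} (the paper's \eqref{eqn-11} and \eqref{eqn-div}), and recognize $\ell_1,\ell_2,\ell_3$ and the stabilization. The only cosmetic difference is that you invoke the primal weak form \eqref{primal-form} where the paper writes $(\bm f,\bm v_0)=-(\nabla\cdot\bm\sigma(\bm u),\bm v_0)$ and integrates by parts; since $\bm v_0\in[H_0^1(\Omega)]^d$, these are the same step. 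Where your write-up genuinely adds value is the final stage: direct substitution of \eqref{eqn-11} and \eqref{eqn-div} into \eqref{aqhuh} does \emph{not} literally produce \eqref{erreq} — it produces \eqref{erreq} minus the residual sum
\begin{equation*}
\sum_{T\in\mathcal T_h}\bigl\langle (Q_bv_{0,n}-v_b)\,\bm n_e\cdot\bm n,\ \bm n\cdot\bm\sigma(\bm u)\cdot\bm n\bigr\rangle_{\partial T},
\end{equation*}
and the paper's proof ends with ``substituting \eqref{eqn-11} and \eqref{eqn-div} into \eqref{aqhuh}'' without ever mentioning this term. You identify it and prove it vanishes by exactly the right mechanism: $Q_bv_{0,n}-v_b$ is single-valued on each face, $\bm n\cdot\bm\sigma(\bm u)\cdot\bm n=\bm n_e\cdot\bm\sigma(\bm u)\cdot\bm n_e$ is orientation-independent and has matching traces from both sides (using $\bm\sigma(\bm u)\in[H^1(\Omega)]^{d\times d}$, which follows from the $H^2$-regularity \eqref{regularity}), the factor $\bm n_e\cdot\bm n$ flips sign across each interior face, and on $\partial\Omega$ both $v_b$ and $v_{0,n}$ vanish. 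So your argument is not merely equivalent to the paper's — it supplies the inter-element cancellation step that the paper's written proof leaves implicit, and which is precisely the reason the enrichment by face-wise constants makes the error equation close without a consistency defect.
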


\begin{proof}
We begin by subtracting \( \bm a\left(\bm u_h, \bm v\right) \) from \( \bm a\left(\bm Q_h \bm u, \bm v\right) \) and applying integration by parts to obtain
\begin{equation}\label{aqhuh}
\begin{aligned}
\bm a\left(\bm Q_h \bm u-\bm u_h, \bm v\right) = &\ \bm a\left(\bm Q_h \bm u, \bm v\right)-(\bm f, \bm v_0)=\bm a\left(\bm Q_h \bm u, \bm v\right)+(\nabla\cdot\bm\sigma(\bm u), \bm v_0)\\
 =&  \ 2 \mu\sum_{T \in \mathcal{T}_h} \Big(\big(\bm \epsilon_w(\bm Q_h \bm u), \bm \epsilon_w( \bm v) \big)_T-\big(\bm \epsilon(\bm u), \bm \epsilon( \bm v_0) \big)_T \Big)+ s(\bm Q_h \bm u, \bm v)\\
& +\lambda \sum_{T \in \mathcal{T}_h}\Big(\left(\nabla_w \cdot (\bm Q_h \bm u), \nabla_w \cdot \bm v\right)_T - \big(\nabla \cdot \bm u , \nabla  \cdot \bm v_0 \big)\Big).
\end{aligned}
\end{equation}
To further simplify \eqref{aqhuh}, we use  \eqref{eq:Qhg} and \eqref{diff-eps} to derive
\begin{equation}\label{eqn-11}
\begin{aligned}
\left(\bm\epsilon_w\left(\bm Q_h \bm u\right), \bm \epsilon_w(\bm v)\right)_T  
=&\ \left(\mathbb Q_h\bm \epsilon( \bm u), \bm\epsilon_w (\bm v)\right)_T +\left\langle \bm{n} \times (\bm Q_0 \bm u  - \bm u ),\bm{n} \times \bm\epsilon_w (\bm v) \cdot \bm{n}\right\rangle_{\partial T}\\
 =&\ \left(\bm\epsilon (\bm u), \bm\epsilon( \bm v_0)\right)_T-\left\langle (Q_bv_{0,n}-v_b)\bm{n}_e \cdot \bm{n},\bm{n} \cdot \mathbb Q_h\bm\epsilon (\bm u) \cdot \bm{n}\right\rangle_{\partial T}\\
& + \left\langle \bm{n} \times (\bm Q_0 \bm u  - \bm u) , \bm{n} \times \bm\epsilon_w \bm v \cdot \bm{n}\right\rangle_{\partial T}.
\end{aligned}
\end{equation}
Similarly, using \eqref{eq:Qhd} and \eqref{diff-div}, we derive
\begin{equation}\label{eqn-div}
\begin{aligned}
\left(\nabla_w\cdot\left(\bm Q_h \bm u\right), \nabla_w\cdot \bm v\right)_T  =&\ \left(\mathcal Q_h(\nabla \cdot\bm u), \nabla_w \cdot\bm v\right)_T\\
  =&\ \left(\nabla\cdot \bm u,\nabla \cdot\bm v_0\right)_T-\left\langle (Q_bv_{0,n}-v_b)\bm{n}_e \cdot \bm{n}, \mathcal Q_h(\nabla \cdot\bm u)\right\rangle_{\partial T}.
\end{aligned}
\end{equation}
We finally obtain \eqref{erreq} by substituting \eqref{eqn-11} and \eqref{eqn-div} into \eqref{aqhuh}.
\end{proof}
\subsection{Error estimates}

In this subsection, we provide estimates for the terms on the right-hand side of the error equation \eqref{erreq} and derive error bounds for both the displacement and the stress tensor.
\begin{lemma}[\cite{scottzhang,monk2003}]
Assume that $\bm{w} \in\left[H^{2}(\Omega)\right]^d$, $\bm{\tau} \in\left[H^{2}(\Omega)\right]^{d\times d}$, and $\rho \in H^1(\Omega)$. Then we have
\begin{align}\label{eqn-7.1}
&\sum_{T \in \mathcal{T}_h}\|\bm{w}-\bm Q_0 \bm{w}\|_{m,T}^2 \leq Ch^{2(l-m)}\|\bm{w}\|_{l}^2\quad  \text{for }  0 \leq m \leq l \leq 2,\\
& \sum_{T \in \mathcal{T}_h} \left\|\bm{\tau}-\mathbb Q_h \bm{\tau}\right\|^2_T \leq C h^{2 }\|\bm{\tau}\|_{1}^2,
\label{eqn-7.2}\\
& \sum_{T \in \mathcal{T}_h}\left\|\rho-\mathcal Q_h \rho\right\|_T^2 \leq C h^{2}\|\rho\|_1^2 .\label{eqn-7.3}
\end{align}
\end{lemma}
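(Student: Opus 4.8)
The plan is to reduce all three estimates to local bounds on a single element (or a small patch of elements) through the standard affine scaling argument, and then sum over $\mathcal T_h$ using the quasi-uniformity of the mesh. I would dispatch the two $L^2$-projection estimates \eqref{eqn-7.2}--\eqref{eqn-7.3} first, since they are the most elementary, and handle the Scott-Zhang estimate \eqref{eqn-7.1} last, as it is the only genuinely nonlocal ingredient.

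For \eqref{eqn-7.2} and \eqref{eqn-7.3}, the key observation is that $\mathbb Q_h$ and $\mathcal Q_h$ act element-wise as the $L^2$-orthogonal projection onto constants, so on each $T$ they reproduce constants exactly and are $L^2(T)$-nonexpansive. I would map $T$ to a fixed reference element $\hat T$ by the affine transformation, on which the projection error of a function $\hat\rho \in H^1(\hat T)$ onto constants is controlled by its $H^1$-seminorm via the Deny--Lions/Bramble--Hilbert lemma (equivalently, the Poincar\'e--Wirtinger inequality for mean-zero functions). Scaling this estimate back to $T$ produces the factor $h_T$ in front of the $H^1(T)$-seminorm; squaring, summing over $T \in \mathcal T_h$, and bounding $h_T \le h$ then yields \eqref{eqn-7.2}--\eqref{eqn-7.3}. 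The tensor-valued case in \eqref{eqn-7.2} follows by applying the scalar estimate componentwise.

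For \eqref{eqn-7.1}, I would invoke the defining properties of the Scott--Zhang interpolant $\bm Q_0$: it is linear, it reproduces the local first-order polynomial space $[P_1]^d$, and it is patch-stable in the sense $\|\bm Q_0 \bm w\|_{m,T} \le C\|\bm w\|_{m,\omega_T}$, where $\omega_T$ is the patch of elements sharing a vertex with $T$. Writing $\bm w - \bm Q_0\bm w = (\bm w - \bm p) - \bm Q_0(\bm w - \bm p)$ for an arbitrary $\bm p \in [P_1(\omega_T)]^d$, applying stability together with the triangle inequality, and then taking the infimum over $\bm p$ via the Bramble--Hilbert lemma on the reference patch, gives the local bound $\|\bm w - \bm Q_0\bm w\|_{m,T} \le C h^{l-m}\|\bm w\|_{l,\omega_T}$ for $0\le m\le l\le 2$. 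Squaring and summing over $T$, while using that on a shape-regular mesh each element belongs to only a uniformly bounded number of patches $\omega_T$, converts $\sum_T \|\bm w\|_{l,\omega_T}^2$ into $C\|\bm w\|_l^2$ and delivers \eqref{eqn-7.1}.

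The main obstacle is precisely the non-locality of $\bm Q_0$ in \eqref{eqn-7.1}: unlike the element-wise $L^2$ projections, its values on $T$ depend on data over the whole patch $\omega_T$ through averaging functionals defined on lower-dimensional facets, so the local estimate necessarily carries patch norms. The care needed is twofold: first, verifying the patch-stability estimate, whose proof involves the trace-type scaling inherent in the dual functionals defining the operator; and second, ensuring that the passage from $\sum_T \|\bm w\|_{l,\omega_T}^2$ to $\|\bm w\|_l^2$ uses only the uniformly bounded overlap guaranteed by shape-regularity. Both points are classical, and the remaining scaling computations are entirely routine, which is why we simply cite \cite{scottzhang,monk2003} for the detailed verification.
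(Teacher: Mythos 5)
The paper gives no proof of this lemma at all---it is quoted directly from the cited references \cite{scottzhang,monk2003}---and your sketch correctly reproduces exactly the classical arguments those references contain: the Bramble--Hilbert/Deny--Lions scaling argument for the element-wise $L^2$ projections $\mathbb Q_h$, $\mathcal Q_h$, and the polynomial-reproduction-plus-patch-stability argument with finite overlap for the Scott--Zhang operator $\bm Q_0$. Your proposal is correct and takes essentially the same route the paper implicitly relies on.
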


\begin{lemma}\label{le:erres}
Assume that $\bm u \in\left[H^{2}(\Omega)\right]^d$ and $\bm v \in \bm V_h^0$. Then, we have
\begin{align*}
 \left|\ell_1(\bm u, \bm v )\right| & \leq C h \|\bm u\|_2 |\!|\!|\bm v |\!|\!|, \\
 \left|\ell_2(\bm u, \bm v )\right| & \leq C h \|\bm u\|_2 |\!|\!|\bm v |\!|\!|, \\
 \left|\ell_3(\bm u, \bm v )\right| & \leq C h \|\nabla\cdot\bm u\|_1 |\!|\!|\bm v |\!|\!|, \\
 \left|s(Q_h \bm u, \bm v  )\right| & \leq C h \|\bm u\|_2 |\!|\!|\bm v |\!|\!|.
\end{align*}
\end{lemma}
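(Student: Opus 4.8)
The plan is to bound all four terms by a single recipe: apply Cauchy--Schwarz on each $\partial T$, split the $h_T$-weighting so that one factor reproduces a piece of $|\!|\!|\bm v|\!|\!|$ while the other is an $O(h)$ projection error, then sum over $T$ with a discrete Cauchy--Schwarz and the approximation bounds \eqref{eqn-7.1}--\eqref{eqn-7.3}. Throughout I would use $|\bm n_e\cdot\bm n|=1$ (both are unit normals to the same face) and the elementary bound $|\bm n\times\bm a|\le|\bm a|$, so the geometric factors are harmless. The two standard tools are the trace inequality $\|\phi\|_{\partial T}^2\le C(h_T^{-1}\|\phi\|_T^2+h_T\|\nabla\phi\|_T^2)$ for $H^1$ functions and the inverse trace inequality $\|\varphi\|_{\partial T}\le Ch_T^{-1/2}\|\varphi\|_T$ for piecewise polynomials.

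For $\ell_1$ and $\ell_3$ the $\bm v$-factor is already of the form $\|Q_b v_{0,n}-v_b\|_{\partial T}$. I would write, for $\ell_1$,
$$|\ell_1|\le\Big(\sum_T h_T^{-1}\|Q_b v_{0,n}-v_b\|_{\partial T}^2\Big)^{1/2}\Big(\sum_T h_T\|\bm\epsilon(\bm u)-\mathbb Q_h\bm\epsilon(\bm u)\|_{\partial T}^2\Big)^{1/2},$$
where the first factor is $\le|\!|\!|\bm v|\!|\!|$ directly by the definition \eqref{md-norm}. For the second factor I would apply the trace inequality to $\phi=\bm\epsilon(\bm u)-\mathbb Q_h\bm\epsilon(\bm u)$; since $\mathbb Q_h\bm\epsilon(\bm u)$ is piecewise constant its gradient drops out, and summing, \eqref{eqn-7.2} together with $\sum_T h_T^2\|\nabla\bm\epsilon(\bm u)\|_T^2\le Ch^2\|\bm u\|_2^2$ gives $Ch\|\bm u\|_2$. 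The estimate for $\ell_3$ is identical with $\mathcal Q_h(\nabla\cdot\bm u)$ in place of $\mathbb Q_h\bm\epsilon(\bm u)$ and \eqref{eqn-7.3} in place of \eqref{eqn-7.2}, yielding $Ch\|\nabla\cdot\bm u\|_1$.

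For $\ell_2$ and the stabilization term the $\bm v$-factor is of a different type, so the $h$-split must go the other way. In $\ell_2$, I would bound $\|\bm n\times(\bm Q_0\bm u-\bm u)\|_{\partial T}\le\|\bm Q_0\bm u-\bm u\|_{\partial T}$ and, because $\bm\epsilon_w(\bm v)$ is piecewise constant, apply the inverse trace inequality $\|\bm\epsilon_w(\bm v)\|_{\partial T}\le Ch_T^{-1/2}\|\bm\epsilon_w(\bm v)\|_T$; a discrete Cauchy--Schwarz then gives $|\ell_2|\le C(\sum_T h_T^{-1}\|\bm Q_0\bm u-\bm u\|_{\partial T}^2)^{1/2}(\sum_T\|\bm\epsilon_w(\bm v)\|_T^2)^{1/2}$, whose second factor is $\le|\!|\!|\bm v|\!|\!|$ by \eqref{md-norm}. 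The first factor is handled by the trace inequality and \eqref{eqn-7.1} with $(m,l)=(0,2)$ and $(1,2)$, giving $Ch\|\bm u\|_2$. For $s(\bm Q_h\bm u,\bm v)$ I would use $Q_b w_{0,n}-w_b=Q_b((\bm Q_0\bm u-\bm u)\cdot\bm n_e)$ with $\bm w=\bm Q_h\bm u$, the $L^2$-stability $\|Q_b\,\cdot\,\|_{\partial T}\le\|\cdot\|_{\partial T}$, and the same trace-plus-approximation estimate, pairing against $(\sum_T h_T^{-1}\|Q_b v_{0,n}-v_b\|_{\partial T}^2)^{1/2}\le|\!|\!|\bm v|\!|\!|$.

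I do not expect a deep obstacle; the work is careful bookkeeping of $h_T$-powers. The one place needing attention is $\ell_2$ (and, for the same reason, $s$): unlike $\ell_1,\ell_3$, the $\bm v$-factor there is a volume quantity, so one must trade the boundary norm of the piecewise-constant $\bm\epsilon_w(\bm v)$ for its scaled volume norm via the inverse trace inequality in order to recover exactly $\sum_T\|\bm\epsilon_w(\bm v)\|_T^2\le|\!|\!|\bm v|\!|\!|^2$. Getting this $h$-split in the right direction is what makes the extra power of $h$ land on the interpolation error rather than being lost.
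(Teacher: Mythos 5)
Your proposal is correct and follows essentially the same route as the paper's proof: Cauchy--Schwarz on each $\partial T$ with the $h_T$-weights split so that one factor reproduces a piece of $|\!|\!|\bm v|\!|\!|$, then trace (or inverse trace, for the piecewise-constant $\bm\epsilon_w(\bm v)$ in $\ell_2$) plus the approximation bounds \eqref{eqn-7.1}--\eqref{eqn-7.3}. The only cosmetic difference is in the stabilization term, where you invoke $L^2$-stability of $Q_b$ while the paper uses the projection identity $\langle Q_b a,\psi\rangle=\langle a,\psi\rangle$ for $\psi$ in the DG space; these are interchangeable.
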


\begin{proof}
To estimate \( \ell_1(\bm u, \bm v) \), we apply the Cauchy-Schwarz inequality, the trace inequality, and \eqref{eqn-7.2} to obtain  
$$
\begin{aligned}
\big|\ell_1(\bm u, \bm v)\big|& =\bigg|\sum_{T \in \mathcal{T}_h}\big\langle(Q_bv_{0,n}-v_b) \bm{n}_e \cdot \bm{n}, \bm{n} \cdot \left(\bm \epsilon (\bm u)-\mathbb{Q}_h\bm \epsilon (\bm u) \right)\cdot \bm{n}\big\rangle_{\partial T}\bigg| \\
& \leq C \bigg(\sum_{T \in \mathcal{T}_h} h_T\|\bm \epsilon (\bm u)-\mathbb{Q}_h\bm \epsilon (\bm u) \|_{\partial T}^2\bigg)^{1/2}\bigg(\sum_{T \in \mathcal{T}_h} h_T^{-1}\|Q_bv_{0,n}-v_b\|_{\partial T}^2\bigg)^{1/2}\\
& \leq C h \|\bm u\|_2  |\!|\!| \bm v|\!|\!|.
\end{aligned}
$$
Similarly, we apply \eqref{eqn-7.1} and \eqref{eqn-7.3} to $\ell_2(\bm u, \bm v)$ and $\ell_3(\bm u, \bm v)$, respectively,  to derive
\begin{align*}
\big| \ell_2(\bm u, \bm v) \big|
\leq &  \ C h \|\bm u\|_{2}|\!|\!|\bm v|\!|\!|,\\
\big|\ell_3(\bm u, \bm v)\big|  \leq  &\ C h\|\nabla\cdot\bm u\|_1|\!|\!| \bm v|\!|\!|.
\end{align*}
Finally, we estimate \( s(Q_h \bm u, \bm v) \) using the definition of \( Q_b \), the trace inequality, and \eqref{eqn-7.1}
\begin{align}
\big|s\left(\bm Q_h \bm u, \bm v\right)\big| & = \bigg|\sum_{T \in \mathcal{T}_h} h_T^{-1}\big\langle Q_b(Q_0u)_n-Q_b u_n , Q_b v_{0,n}-v_b\big\rangle_{\partial T}\bigg| \nonumber\\
& = \bigg|\sum_{T \in \mathcal{T}_h} h_T^{-1}\big\langle (Q_0u)_n- u_n , Q_b v_{0,n}-v_b\big\rangle_{\partial T}\bigg| \nonumber\\
& \leq \bigg(\sum_{T \in \mathcal{T}_h}\left(h_T^{-2}\left\|\bm Q_0 \bm u - \bm u\right\|_T^2+\left\|\nabla\left(\bm Q_0 \bm u-\bm u\right)\right\|_T^2\right)\bigg)^{1/2}|\!|\!|\bm v|\!|\!|\nonumber\\
& \leq C h  \|\bm u\|_{2}|\!|\!|\bm v|\!|\!|.\label{eq:s}
\end{align}
This completes the proof.
\end{proof}
Denote
$$
W_h^0=\left\{ q \in L^2(\Omega): \left.q\right|_T \in P_{0}(T)\text{ for all } T \in \mathcal{T}_h \text{ and }\int_\Omega q \, dx = 0 \right\}.
$$
To establish the error estimate for the stress tensor, we present the following lemma. 
\begin{lemma}\label{infsup}
There exists a positive constant $\beta$ independent of $h$ such that
\begin{equation}\label{eq:inf}
\sup _{\bm{v} \in \bm V_h^0} \frac{(\nabla_w \cdot \bm{v}, \rho)}{|\!|\!| \bm{v}|\!|\!| } \geq \beta\|\rho\|,\quad \forall \rho \in W_h^0.
\end{equation}
\end{lemma}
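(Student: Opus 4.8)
The plan is to derive the discrete inf-sup condition by transferring the classical continuous inf-sup (LBB) property of the divergence operator to the EG space through the projection $\bm Q_h$, exploiting the \emph{exact} divergence commutativity \eqref{eq:Qhd}. First I would fix $\rho \in W_h^0$. Since $W_h^0 \subset L^2(\Omega)$ consists of mean-zero functions, the surjectivity of the divergence (the Bogovskii/Stokes construction) furnishes a field $\bm w \in [H_0^1(\Omega)]^d$ with $\nabla\cdot\bm w = \rho$ and $\|\bm w\|_1 \leq C\|\rho\|$. I would then take as test function $\bm v = \bm Q_h\bm w = \{\bm Q_0\bm w, Q_b w_n\}$. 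Because the Scott-Zhang operator $\bm Q_0$ preserves homogeneous boundary values and $\bm w|_{\partial\Omega}=\bm 0$ forces $w_n = 0$ on all boundary faces, this $\bm v$ lies in $\bm V_h^0$ and is therefore admissible in the supremum.

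Next I would compute the numerator. Since $\rho$ is piecewise constant, $\rho \in P_0(\mathcal T_h)$, so I may apply \eqref{eq:Qhd} elementwise with $\varphi_h = \rho$ and sum over $T\in\mathcal T_h$ to obtain
\[
(\nabla_w\cdot \bm Q_h\bm w,\rho) = (\mathcal Q_h(\nabla\cdot\bm w),\rho) = (\nabla\cdot\bm w,\mathcal Q_h\rho) = (\nabla\cdot\bm w,\rho) = \|\rho\|^2,
\]
where I use the self-adjointness of the $L^2$ projection $\mathcal Q_h$, the identity $\mathcal Q_h\rho = \rho$ (valid as $\rho\in P_0(\mathcal T_h)$), and $\nabla\cdot\bm w = \rho$. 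This is the crucial point: the commutativity \eqref{eq:Qhd} makes the numerator exactly $\|\rho\|^2$ with no consistency error, which is precisely what the EG construction was designed to guarantee.

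Finally I would bound the denominator. By the norm equivalence of Lemma \ref{lem:2} it suffices to control $\|\bm Q_h\bm w\|_{1,h}$. The strain contribution $\sum_{T}\|\bm\epsilon(\bm Q_0\bm w)\|_T^2$ is bounded by $\|\nabla\bm Q_0\bm w\|^2\leq C\|\bm w\|_1^2$ via the $H^1$-stability of Scott-Zhang. For the stabilization contribution $\sum_T h_T^{-1}\|Q_b(\bm Q_0\bm w)_n - Q_b w_n\|_{\partial T}^2$ I would reuse the exact chain of estimates already carried out for $s(\bm Q_h\bm u,\bm v)$ in \eqref{eq:s}, namely the $L^2$-boundedness of $Q_b$, the trace inequality, and the approximation bound \eqref{eqn-7.1}, to get
\[
\sum_T h_T^{-1}\big\|Q_b(\bm Q_0\bm w - \bm w)\cdot\bm n_e\big\|_{\partial T}^2 \leq C\sum_T\Big(h_T^{-2}\|\bm Q_0\bm w - \bm w\|_T^2 + \|\nabla(\bm Q_0\bm w - \bm w)\|_T^2\Big) \leq C\|\bm w\|_1^2.
\]
Combining gives $|\!|\!|\bm Q_h\bm w|\!|\!| \leq C\|\bm w\|_1 \leq C\|\rho\|$, and dividing the numerator by the denominator yields \eqref{eq:inf} with $\beta = 1/C$.

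The main obstacle is the denominator estimate under merely $H^1$ regularity of $\bm w$: the continuous inf-sup supplies only $\bm w \in [H_0^1(\Omega)]^d$, so I cannot invoke the full second-order rate and must instead rely on Scott-Zhang stability together with the \emph{patch-based} first-order approximation property and the trace inequality to control the stabilization term. A secondary technical point is confirming that $\bm Q_h\bm w \in \bm V_h^0$, which rests on the boundary-value-preserving property of the Scott-Zhang interpolant and on $w_n = 0$ along $\partial\Omega$.
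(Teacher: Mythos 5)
Your proposal is correct and follows essentially the same route as the paper: both transfer the continuous inf-sup (your Bogovskii right inverse is just the constructive form of the paper's $\tilde{\bm v}$) to $\bm V_h^0$ through the test function $\bm Q_h\bm w$, use the exact divergence commutativity \eqref{eq:Qhd} to make the numerator $(\nabla\cdot\bm w,\rho)$, and establish the stability $|\!|\!|\bm Q_h\bm w|\!|\!|\leq C\|\bm w\|_1$ under only $H^1$ regularity. The sole (cosmetic) difference is in the denominator: you invoke the norm equivalence of Lemma \ref{lem:2} plus Scott--Zhang $H^1$-stability, whereas the paper bounds $\|\bm\epsilon_w(\bm Q_h\bm w)\|_T$ directly from \eqref{eq:Qhg}; both are valid, and your explicit check that $\bm Q_h\bm w\in\bm V_h^0$ is a point the paper leaves implicit.
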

\begin{proof}For any  $\rho \in W_h^0 \subset L_0^2(\Omega)$, there exists a vector-valued function $\tilde{\bm v} \in\left[H_0^1(\Omega)\right]^d$ such that
\begin{equation}\label{eqn-5}
\frac{(\nabla \cdot \tilde{\bm v}, \rho)}{\| \tilde{\bm v}\|_1} \geq C\|\rho\|,
\end{equation}
    where $C>0$ is a constant depending only on the domain $\Omega$. 
  By setting $\bm v=\{\bm v_0,v_b\}=\bm Q_h \tilde{\bm v} \in \bm V_h^0$,  it follows from 
    \eqref{eq:Qhg}, the trace inequality, and \eqref{eqn-7.1} that
 $$  \begin{aligned}
\left\|\bm\epsilon_w( \bm v)\right\|_T^2 =& \big(\bm\epsilon_w\left(\bm Q_h \tilde{\bm v}\right), \bm\epsilon_w(\bm v)\big)_T\\
 =&\big( \mathbb Q_h \bm\epsilon( \tilde{\bm v}),\bm\epsilon_w(\bm v)\big)_T+\big\langle \boldsymbol{n} \times (\bm Q_0 \tilde{\bm v}  - \tilde{\bm v}) ,\boldsymbol{n} \times \bm\epsilon_w({\bm v}) \cdot \bm{n}\big\rangle_{\partial T}\\
 \leq&\ \|\mathbb Q_h \bm\epsilon( \tilde{\bm v})\|_T \|\bm\epsilon_w({\bm v})\|_T  +\big(h_T^{-1}\|\boldsymbol{n} \times (\bm Q_0 \tilde{\bm v}  - \tilde{\bm v}) \|_{\partial T}^2 \big)^{1/2} \big(h_T\|\boldsymbol{n} \times \bm\epsilon_w({\bm v}) \cdot \bm{n}\|_{\partial T}^2 \big)^{1/2}\\
 \leq &\ C \|\tilde{\bm v}\|_{1,T}\|\bm\epsilon_w({\bm v})\|_T,
\end{aligned}
$$
which leads to \begin{equation}\label{eqn-7}
\left\|\bm\epsilon_w({\bm v})\right\|_T \leq C\| \tilde{\bm v}\|_{1,T}.
\end{equation}
Note that $\bm v_0 = \bm Q_0\tilde{\bm v}$ and $v_b = Q_b\tilde v_n$. 
Using the trace inequality, the definition of $Q_b$, and \eqref{eqn-7.1} yields
\begin{equation}
\begin{aligned}\label{eqn-8}
 &\sum_{T \in \mathcal{T}_h} h_T^{-1}\left\|Q_bv_{0,n}-v_b\right\|_{\partial T}^2 =\sum_{T \in \mathcal{T}_h} h_T^{-1}\left\|Q_bv_{0,n}-Q_b \tilde v_n)\right\|_{\partial T}^2 \\
 \leq& \ \sum_{T \in \mathcal{T}_h} h_T^{-1}\left\|v_{0,n}- \tilde v_n\right\|_{\partial T}^2 
 \leq C\sum_{T \in \mathcal{T}_h} h_T^{-1}\left\|\bm Q_0\tilde{\bm v}- \tilde{\bm v}\right\|_{\partial T}^2  {\leq C\|  \tilde{\bm v}\|_1^2.}
\end{aligned}
\end{equation}
By  collecting  \eqref{eqn-7} and \eqref{eqn-8}, we have 
{\begin{equation}\label{eqn-6}
|\!|\!|\bm v|\!|\!| \leq C\| \tilde{\bm v}\|_1.
\end{equation}}
It follows from \eqref{eq:Qhd} and the definition of $\mathcal Q_h$ that
$$
b(\bm v, \rho)=\left(\nabla_w \cdot\left(\bm Q_h \tilde{\bm v}\right), \rho\right)=\left(\mathcal Q_h(\nabla \cdot \tilde{\bm v}), \rho\right)=(\nabla \cdot \tilde{\bm v}, \rho),
$$
which, together with
 \eqref{eqn-5} and \eqref{eqn-6}, gives
$$
\frac{|b(\bm v, \rho)|}{|\!|\!| \bm v|\!|\!| } \geq \frac{|(\nabla \cdot \tilde{\bm v}, \rho)|}{C\|\tilde{\bm v}\|_1} \geq \beta\|\rho\|,
$$
where $\beta$ is a positive constant. 
\end{proof}

\begin{theorem}\label{th1}
The following error estimates hold true,
\begin{align}
|\!|\!|\bm e_h |\!|\!| & \leq C h (\|\bm u\|_2+\lambda\|\nabla\cdot\bm u\|_1) ,\label{eherr}\\
\| \bm \sigma_w (\bm e_h)\| & \leq C h (\|\bm u\|_2+\lambda\|\nabla\cdot\bm u\|_1),\label{diverr}
\end{align}
where $C$ is independent of the Lam\'e constant $\lambda$.
\end{theorem}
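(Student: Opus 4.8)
The plan is to establish the two bounds in sequence: first the energy-norm estimate \eqref{eherr}, which I would then bootstrap to control the stress in \eqref{diverr}.

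For \eqref{eherr} I would begin by recording the $\lambda$-independent coercivity of $\bm a$. Since the term $\lambda\sum_{T}\|\nabla_w\cdot\bm v\|_T^2$ is nonnegative and $\mu>\mu_1$, discarding it gives $\bm a(\bm v,\bm v)\ge\min(2\mu_1,1)\,|\!|\!|\bm v|\!|\!|^2$ with a constant free of $\lambda$. I would then test the error equation \eqref{erreq} with $\bm v=\bm e_h$, so the left-hand side is $\bm a(\bm e_h,\bm e_h)\ge C|\!|\!|\bm e_h|\!|\!|^2$, and bound the four terms on the right with Lemma \ref{le:erres}, each of which produces a factor $|\!|\!|\bm e_h|\!|\!|$. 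Collecting the estimates and using $\mu<\mu_2$ yields $|\!|\!|\bm e_h|\!|\!|^2\le Ch(\|\bm u\|_2+\lambda\|\nabla\cdot\bm u\|_1)\,|\!|\!|\bm e_h|\!|\!|$, from which \eqref{eherr} follows upon dividing through; the constant stays $\lambda$-independent because the single $\lambda$ on the left multiplies $\ell_3$, which is matched exactly by the $\lambda\|\nabla\cdot\bm u\|_1$ term on the right.

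For \eqref{diverr} I would split $\bm\sigma_w(\bm e_h)=2\mu\bm\epsilon_w(\bm e_h)+\lambda(\nabla_w\cdot\bm e_h)\mathbf I$ and treat the two pieces separately. The deviatoric piece is immediate, $2\mu\|\bm\epsilon_w(\bm e_h)\|\le 2\mu_2\,|\!|\!|\bm e_h|\!|\!|$, and is already controlled by \eqref{eherr} via the definition of the triple norm. The delicate piece is the pressure-type term $\lambda\|(\nabla_w\cdot\bm e_h)\mathbf I\|=\lambda\sqrt d\,\|\nabla_w\cdot\bm e_h\|$, for which the naive estimate $\lambda\|\nabla_w\cdot\bm e_h\|\le C\lambda\,|\!|\!|\bm e_h|\!|\!|$ carries a spurious extra factor $\lambda$ and would destroy robustness. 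To circumvent this I would invoke the inf-sup condition of Lemma \ref{infsup}. First I must verify that $\rho:=\nabla_w\cdot\bm e_h$ lies in $W_h^0$, i.e. has zero mean: summing \eqref{eq:div} with $\varphi=1$ over all elements, the interior-face contributions cancel because the outward normals on a shared face are opposite while $e_b$ and $\bm n_e$ are single-valued, and the boundary contributions vanish since $\bm e_h\in\bm V_h^0$ forces $e_b=0$ on $\partial\Omega$; hence $\int_\Omega\nabla_w\cdot\bm e_h=0$. Applying \eqref{eq:inf} with this $\rho$ reduces the task to bounding $\lambda(\nabla_w\cdot\bm v,\nabla_w\cdot\bm e_h)$ for arbitrary $\bm v\in\bm V_h^0$.

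The key manipulation is to isolate the $\lambda$-contribution from the bilinear form: from the definition of $\bm a$ I would write $\lambda(\nabla_w\cdot\bm e_h,\nabla_w\cdot\bm v)=\bm a(\bm e_h,\bm v)-2\mu(\bm\epsilon_w(\bm e_h),\bm\epsilon_w(\bm v))-s(\bm e_h,\bm v)$ and replace $\bm a(\bm e_h,\bm v)$ by the right-hand side of \eqref{erreq}. By Lemma \ref{le:erres} the terms $2\mu\ell_1$, $2\mu\ell_2$, $s(\bm Q_h\bm u,\bm v)$ are each bounded by $Ch\|\bm u\|_2\,|\!|\!|\bm v|\!|\!|$ and $\lambda\ell_3$ by $Ch\lambda\|\nabla\cdot\bm u\|_1\,|\!|\!|\bm v|\!|\!|$, while the two subtracted terms are bounded by $C\,|\!|\!|\bm e_h|\!|\!|\,|\!|\!|\bm v|\!|\!|$ via Cauchy-Schwarz and $\mu<\mu_2$. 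Substituting the already-proven bound \eqref{eherr} for $|\!|\!|\bm e_h|\!|\!|$ collapses the whole expression to $Ch(\|\bm u\|_2+\lambda\|\nabla\cdot\bm u\|_1)\,|\!|\!|\bm v|\!|\!|$, so the inf-sup gives $\lambda\|\nabla_w\cdot\bm e_h\|\le Ch(\|\bm u\|_2+\lambda\|\nabla\cdot\bm u\|_1)$, and combining with the deviatoric piece yields \eqref{diverr}. The main obstacle is exactly this last stage: obtaining a $\lambda$-robust bound on $\lambda\nabla_w\cdot\bm e_h$, which depends on verifying the zero-mean property so that the inf-sup applies and on re-expressing the pressure term through the error equation so the problematic $\lambda$ cancels against the data term $\lambda\|\nabla\cdot\bm u\|_1$ rather than against $|\!|\!|\bm e_h|\!|\!|$.
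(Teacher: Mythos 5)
Your proposal is correct and follows essentially the same route as the paper: testing the error equation \eqref{erreq} with $\bm e_h$ plus Lemma \ref{le:erres} for \eqref{eherr}, then combining the inf-sup condition of Lemma \ref{infsup} with the error equation to isolate and bound $\lambda\|\nabla_w\cdot\bm e_h\|$ for \eqref{diverr}. The only difference is that you spell out two details the paper leaves implicit -- the $\lambda$-independent coercivity of $\bm a$ and the verification that $\nabla_w\cdot\bm e_h$ has zero mean so that it lies in $W_h^0$ -- both of which you handle correctly.
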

\begin{proof}
By letting $\bm{v}=\bm{e}_h$ in error equation \eqref{erreq} and using  Lemma \ref{le:erres}, we obtain
$$
C_1|\!|\!|\bm e_h |\!|\!|^2 \leq \bm a(\bm e_h, \bm e_h) \leq C_2 h (\|\bm u\|_2+\lambda\|\nabla\cdot\bm u\|_1) |\!|\!|\bm e_h |\!|\!|,
$$
which implies that
$$
|\!|\!|\bm e_h |\!|\!|  \leq C h (\|\bm u\|_2+\lambda\|\nabla\cdot\bm u\|_1).
$$
Note that $\nabla_w \cdot \bm e_h \in W_h^0$.
Then it follows from  Lemma \ref{infsup}, \eqref{erreq},  Lemma 
 \ref{le:erres}, and  \eqref{eherr} that
\begin{align*}
   & \lambda \|\nabla_w \cdot \bm e_h\|  \leq \sup _{\bm v \in \bm V_h^0} \frac{\lambda (\nabla_w \cdot \bm v,\nabla_w \cdot \bm e_h)}{\beta |\!|\!|\bm v |\!|\!|} \\
    = & \  \sup _{\bm v \in \bm V_h^0}\frac{ 2 \mu\sum_{i=1}^2\ell_i (\bm u, \bm v)+\lambda \ell_3 (\bm u, \bm v) -2 \mu\sum_{T \in \mathcal{T}_h}\left(\bm \epsilon_w(\bm e_h), \bm \epsilon_w(\bm v)\right)_T + s\left(\bm Q_h \boldsymbol{u}, \boldsymbol{v}\right)-s(\bm e_h,\bm v)}{\beta |\!|\!|\bm v |\!|\!|}\\
     \leq & \  C h (\|\bm u\|_2+\lambda\|\nabla\cdot\bm u\|_1),
\end{align*}
which, combined with \eqref{eherr} and \eqref{weakeps}, yields \eqref{diverr}.
\end{proof}
\begin{corollary}\label{re3} Under the assumption \eqref{regularity}, the following error estimates hold
    \begin{align*}
    |\bm u-\bm u_0|_1&\leq Ch\|\bm f\|,\\
\|\bm{\sigma} (\bm u)-\bm{\sigma}_w (\bm u_h) \| &\leq C h \|\bm f\|.
\end{align*}
\end{corollary}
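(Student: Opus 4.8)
The plan is to combine the energy-norm estimates of Theorem \ref{th1} with standard approximation and trace bounds, while exploiting the \emph{exact} commutativity \eqref{eq:Qhd} so that the factor $\lambda$ only ever multiplies a projection error of $\nabla\cdot\bm u$, which the regularity assumption \eqref{regularity} keeps under control. Note first that since $\bm u=\bm 0$ on $\partial\Omega$, both $\bm Q_0\bm u$ and $Q_b u_n$ vanish there, so $\bm e_h=\bm Q_h\bm u-\bm u_h\in\bm V_h^0$ and the norms $|\!|\!|\cdot|\!|\!|$ and $\|\cdot\|_{1,h}$ apply to it.

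For the displacement bound I would split $\bm u-\bm u_0=(\bm u-\bm Q_0\bm u)+\bm e_0$. The interpolation piece is handled directly by \eqref{eqn-7.1} with $m=1$, $l=2$, giving $|\bm u-\bm Q_0\bm u|_1\le Ch\|\bm u\|_2$. For $\bm e_0$, Korn's inequality \eqref{generalizedkorn} reduces $|\bm e_0|_1=\|\nabla\bm e_0\|$ to $\|\bm\epsilon(\bm e_0)\|$, which is bounded by $\|\bm e_h\|_{1,h}$ from \eqref{norm} and hence, by the norm equivalence of Lemma \ref{lem:2}, by $C|\!|\!|\bm e_h|\!|\!|$. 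Applying \eqref{eherr} and then \eqref{regularity} closes the first inequality with a $\lambda$-independent constant.

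For the stress bound I would use the triangle inequality $\|\bm\sigma(\bm u)-\bm\sigma_w(\bm u_h)\|\le\|\bm\sigma(\bm u)-\bm\sigma_w(\bm Q_h\bm u)\|+\|\bm\sigma_w(\bm e_h)\|$. The second term is controlled directly by \eqref{diverr} together with \eqref{regularity}. For the consistency term I split it into its $\mu$-part $2\mu\bigl(\bm\epsilon(\bm u)-\bm\epsilon_w(\bm Q_h\bm u)\bigr)$ and its $\lambda$-part $\lambda\bigl(\nabla\cdot\bm u-\nabla_w\cdot\bm Q_h\bm u\bigr)\mathbf I$. Because \eqref{eq:Qhd} forces $\nabla_w\cdot\bm Q_h\bm u=\mathcal Q_h(\nabla\cdot\bm u)$ exactly, the $\lambda$-part equals $\lambda\bigl(\nabla\cdot\bm u-\mathcal Q_h(\nabla\cdot\bm u)\bigr)$, whose norm is bounded by $Ch\lambda\|\nabla\cdot\bm u\|_1$ via \eqref{eqn-7.3} and hence by $Ch\|\bm f\|$ through \eqref{regularity}; this is precisely where $\lambda$-robustness is secured.

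The main work, and the step I expect to be the obstacle, is the $\mu$-part $\|\bm\epsilon(\bm u)-\bm\epsilon_w(\bm Q_h\bm u)\|$, since $\bm\epsilon_w(\bm Q_h\bm u)$ is \emph{not} the projected strain $\mathbb Q_h\bm\epsilon(\bm u)$ but differs from it by the tangential boundary correction in \eqref{eq:Qhg}. I would write $\bm\epsilon(\bm u)-\bm\epsilon_w(\bm Q_h\bm u)=\bigl(\bm\epsilon(\bm u)-\mathbb Q_h\bm\epsilon(\bm u)\bigr)+\bigl(\mathbb Q_h\bm\epsilon(\bm u)-\bm\epsilon_w(\bm Q_h\bm u)\bigr)$; the first summand is $O(h\|\bm u\|_2)$ by \eqref{eqn-7.2}. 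For the second summand, both terms are symmetric piecewise constants, so I test \eqref{eq:Qhg} against the difference itself and bound the boundary term $\langle\bm n\times(\bm Q_0\bm u-\bm u),\,\bm n\times\bm\tau_h\cdot\bm n\rangle_{\partial T}$ using the inverse trace inequality $\|\bm\tau_h\|_{\partial T}\le Ch_T^{-1/2}\|\bm\tau_h\|_T$ on the piecewise-constant factor together with the trace estimate $\|\bm Q_0\bm u-\bm u\|_{\partial T}\le Ch_T^{3/2}\|\bm u\|_{2,T}$ (obtained by combining the trace inequality with the $m=0$ and $m=1$ cases of \eqref{eqn-7.1}). This yields $\|\mathbb Q_h\bm\epsilon(\bm u)-\bm\epsilon_w(\bm Q_h\bm u)\|_T\le Ch_T\|\bm u\|_{2,T}$, and summing over elements gives $O(h\|\bm u\|_2)$. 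Multiplying by $2\mu$ and invoking \eqref{regularity} delivers the desired bound $Ch\|\bm f\|$, completing both estimates.
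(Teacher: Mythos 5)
Your proposal is correct and follows essentially the same route as the paper: the displacement bound via the splitting $\bm u-\bm u_0=(\bm u-\bm Q_0\bm u)+\bm e_0$ with \eqref{eqn-7.1}, Korn's inequality \eqref{generalizedkorn}, Lemma \ref{lem:2}, \eqref{eherr}, and \eqref{regularity}; and the stress bound via the triangle inequality through $\bm\sigma_w(\bm Q_h\bm u)$, with the exact commutativity \eqref{eq:Qhd} plus \eqref{eqn-7.3} handling the $\lambda$-part and the tested form of \eqref{eq:Qhg} with trace and inverse inequalities handling the strain part, exactly as in the paper's \eqref{divest} and \eqref{epsest}. Your expanded derivation of $\|\mathbb Q_h\bm\epsilon(\bm u)-\bm\epsilon_w(\bm Q_h\bm u)\|\le Ch\|\bm u\|_2$ simply fills in the details the paper compresses into one sentence.
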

\begin{proof}
 To estimate $|\bm u-\bm u_0|_1$, we
    apply \eqref{eherr}, \eqref{eqn-7.1}, Lemma \ref{lem:2},  \eqref{generalizedkorn}, and \eqref{regularity}. 
To estimate $\|\bm{\sigma} (\bm u)-\bm{\sigma}_w (\bm u_h) \|$, we begin with the triangle inequality, which gives  
\begin{equation}\label{ree3}
\begin{aligned}
\|\bm{\sigma} (\bm u)-\bm{\sigma}_w (\bm u_h)\|  
& \leq  \|\bm{\sigma} (\bm u)-\bm{\sigma}_w (\bm Q_h \bm u) \|+ \| \bm{\sigma}_w (\bm Q_h \bm u)-\bm{\sigma}_w (\bm u_h)\|\\
& = 2\mu \|\bm{\epsilon} (\bm u)-\bm{\epsilon}_w (\bm Q_h \bm u) \|+\lambda \|\nabla\cdot\bm u-\nabla_w\cdot(\bm Q_h\bm u)\|  + \| \bm{\sigma}_w (\bm e_h)\|.
\end{aligned}
\end{equation}
For the term \( \|\nabla\cdot\bm u-\nabla_w\cdot(\bm Q_h\bm u)\| \), applying \eqref{eq:Qhd} and \eqref{eqn-7.3} leads to  
\begin{align}\label{divest}
    \|\nabla\cdot\bm u-\nabla_w\cdot(\bm Q_h\bm u)\| = \|\nabla\cdot\bm u-\mathcal Q_h(\nabla\cdot\bm u)\|\leq Ch\|\nabla\cdot\bm u\|_1.
\end{align}
For the term \( \|\bm{\epsilon} (\bm u)-\bm{\epsilon}_w (\bm Q_h \bm u) \| \), we first apply \eqref{eq:Qhg}, the trace inequality, and the inverse inequality to obtain  
\[
\|\mathbb Q_h\bm\epsilon (\bm u)-\bm\epsilon_w (\bm Q_h \bm u)\| \leq C h \|\bm u\|_2,
\]
which, combined with \eqref{eqn-7.2}, yields
\begin{align}\label{epsest}
    \|\bm{\epsilon} (\bm u)-\bm{\epsilon}_w (\bm Q_h\bm u)\| \leq C h \|\bm u\|_2.
\end{align}
Substituting \eqref{divest} and \eqref{epsest} into \eqref{ree3} and incorporating \eqref{diverr}, we obtain  
\begin{align*}
\|\bm{\sigma} (\bm u)-\bm{\sigma}_w (\bm u_h) \| \leq C h \|\bm u\|_2 + C h \lambda \|\nabla \cdot \bm u\|_1,
\end{align*}
which, together with \eqref{regularity}, completes the proof.
\end{proof}

\begin{remark}
\cref{re3} demonstrates that Algorithm \ref{eg} is locking-free.
\end{remark}

Next, we derive an error estimate in the sense of $L^2$-norm through a duality argument. To this end, consider the dual problem of seeking $\bm{\phi}$ such that

\begin{subequations}
\begin{align}\label{e0}
-\nabla \cdot \bm \sigma(\bm{\phi}) & =\bm e_0 & & \text { in } \Omega, \\
\bm{\phi} & =0 & & \text { on } \partial\Omega, \label{e0b}
\end{align}
\end{subequations}
and assume that
\begin{align}\label{assump-reg-phi}
    \|\bm{\phi}\|_2 + \lambda \|\nabla \cdot \bm \phi\|_1\leq C\left\|\bm e_0\right\|.
\end{align}
\begin{theorem}\label{l2err}
Under the assumptions \eqref{assump-reg-phi} and \eqref{regularity}, the following estimate holds
\[\|\bm u-\bm u_0\|\leq Ch^2\|\bm f\|.\]
\end{theorem}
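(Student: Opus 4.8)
The plan is to run an Aubin--Nitsche duality argument that reduces the $L^2$ estimate to the energy estimate \eqref{eherr} already in hand. First I would split
\[
\|\bm u-\bm u_0\|\le \|\bm u-\bm Q_0\bm u\|+\|\bm e_0\|,
\]
and dispose of the first term by \eqref{eqn-7.1} (with $m=0$, $l=2$) together with \eqref{regularity}, giving $\|\bm u-\bm Q_0\bm u\|\le Ch^2\|\bm f\|$. It then remains to prove $\|\bm e_0\|\le Ch^2\|\bm f\|$, for which I use the dual problem \eqref{e0}, whose solution $\bm\phi$ satisfies the weak form $2\mu(\bm\epsilon(\bm\phi),\bm\epsilon(\bm w))+\lambda(\nabla\cdot\bm\phi,\nabla\cdot\bm w)=(\bm e_0,\bm w)$ for all $\bm w\in[H_0^1(\Omega)]^d$; note $\bm Q_h\bm\phi\in\bm V_h^0$ by the homogeneous boundary condition \eqref{e0b}.

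The key identity I would establish is the analogue of the error equation for the dual solution: repeating verbatim the integration-by-parts computation that produced \eqref{erreq}, but applied to $\bm Q_h\bm\phi$ and using $\bm v_0\in\mathrm{CG}$ as an admissible test function in the dual weak form, yields, for every $\bm v=\{\bm v_0,v_b\}\in\bm V_h^0$,
\[
\bm a(\bm Q_h\bm\phi,\bm v)=(\bm e_0,\bm v_0)+2\mu\,\ell_1(\bm\phi,\bm v)+2\mu\,\ell_2(\bm\phi,\bm v)+\lambda\,\ell_3(\bm\phi,\bm v)+s(\bm Q_h\bm\phi,\bm v).
\]
The residual edge contributions of $\langle(Q_bv_{0,n}-v_b)\bm n_e\cdot\bm n,\bm n\cdot\bm\epsilon(\bm\phi)\cdot\bm n\rangle$ cancel across interior edges (because $\bm n\cdot\bm\epsilon(\bm\phi)\cdot\bm n$ is single-valued while $\bm n_e\cdot\bm n$ changes sign) and vanish on $\partial\Omega$, exactly as in the derivation of \eqref{erreq}. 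Choosing $\bm v=\bm e_h$ turns the first term into $\|\bm e_0\|^2$; then the symmetry of $\bm a$ and the primal error equation \eqref{erreq} with $\bm v=\bm Q_h\bm\phi$ let me rewrite $\bm a(\bm Q_h\bm\phi,\bm e_h)=\bm a(\bm e_h,\bm Q_h\bm\phi)$, so that $\|\bm e_0\|^2$ becomes a sum of eight terms: the four "dual-data" terms $\ell_i(\bm\phi,\bm e_h)$, $s(\bm Q_h\bm\phi,\bm e_h)$, and the four "primal-data" terms $\ell_i(\bm u,\bm Q_h\bm\phi)$, $s(\bm Q_h\bm u,\bm Q_h\bm\phi)$.

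The first group is immediate from Lemma~\ref{le:erres} (with $\bm\phi$ as data), the bound $|\!|\!|\bm e_h|\!|\!|\le Ch\|\bm f\|$ from \eqref{eherr}, and the dual regularity \eqref{assump-reg-phi}; each is $O(h^2\|\bm f\|\|\bm e_0\|)$, with the factor $\lambda$ in the $\ell_3$ term absorbed through $\lambda\|\nabla\cdot\bm\phi\|_1\le C\|\bm e_0\|$. For the second group the gain comes from the special structure of $\bm Q_h\bm\phi$: since $Q_b(\bm Q_0\bm\phi)_n-Q_b\phi_n=Q_b\big((\bm Q_0\bm\phi-\bm\phi)\cdot\bm n_e\big)$, the trace inequality and \eqref{eqn-7.1} give $\sum_T h_T^{-1}\|Q_b(\bm Q_0\bm\phi)_n-Q_b\phi_n\|_{\partial T}^2\le Ch^2\|\bm\phi\|_2^2$, one power of $h$ sharper than the generic $|\!|\!|\bm Q_h\bm\phi|\!|\!|\le C\|\bm\phi\|_1$. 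This immediately makes $\ell_1(\bm u,\bm Q_h\bm\phi)$ (via \eqref{eqn-7.2}), $\lambda\,\ell_3(\bm u,\bm Q_h\bm\phi)$ (via \eqref{eqn-7.3} and $\lambda\|\nabla\cdot\bm u\|_1\le C\|\bm f\|$), and $s(\bm Q_h\bm u,\bm Q_h\bm\phi)$ all $O(h^2\|\bm f\|\|\bm e_0\|)$.

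The one genuinely delicate term, and the main obstacle, is $\ell_2(\bm u,\bm Q_h\bm\phi)$: a naive Cauchy--Schwarz estimate only yields $O(h)$, because $\|\bm\epsilon_w(\bm Q_h\bm\phi)\|$ is merely $O(1)$. The remedy is a consistency/cancellation step: I subtract the continuous field $\bm\epsilon(\bm\phi)$ and use that $\bm Q_0\bm u-\bm u$ is single-valued across edges and vanishes on $\partial\Omega$, so that $\sum_T\langle\bm n\times(\bm Q_0\bm u-\bm u),\bm n\times\bm\epsilon(\bm\phi)\cdot\bm n\rangle_{\partial T}=0$; this reduces $\ell_2(\bm u,\bm Q_h\bm\phi)$ to a pairing against $\bm\epsilon_w(\bm Q_h\bm\phi)-\bm\epsilon(\bm\phi)$. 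The latter is $O(h\|\bm\phi\|_2)$ in the mesh-weighted boundary norm, by \eqref{eqn-7.2} for $\mathbb Q_h\bm\epsilon(\bm\phi)-\bm\epsilon(\bm\phi)$ and by the estimate $\|\mathbb Q_h\bm\epsilon(\bm\phi)-\bm\epsilon_w(\bm Q_h\bm\phi)\|\le Ch\|\bm\phi\|_2$ (with an inverse inequality) from the proof of Corollary~\ref{re3} for the piecewise-constant remainder, while $\bm n\times(\bm Q_0\bm u-\bm u)$ contributes $O(h\|\bm u\|_2)$; together this gives $O(h^2\|\bm f\|\|\bm e_0\|)$. Summing the eight contributions yields $\|\bm e_0\|^2\le Ch^2\|\bm f\|\,\|\bm e_0\|$, hence $\|\bm e_0\|\le Ch^2\|\bm f\|$ and the claim.
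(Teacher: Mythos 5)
Your proof is correct, and although it lives in the same Aubin--Nitsche duality framework as the paper's (same dual problem \eqref{e0}, same regularity hypothesis \eqref{assump-reg-phi}), the decomposition of $\|\bm e_0\|^2$ is genuinely different. The paper expands $\|\bm e_0\|^2=2\mu\sum_T(\bm\epsilon(\bm\phi),\bm\epsilon(\bm e_0))_T+\lambda\sum_T(\nabla\cdot\bm\phi,\nabla\cdot\bm e_0)_T$ directly, threads in the commutativity identities \eqref{eq:Qhg}--\eqref{eq:Qhd} together with both the continuous weak form and the discrete equation, and arrives at \eqref{e0l2}, whose terms carry the double smallness explicitly: products of two projection errors, mixed boundary terms in which $\bm\epsilon(\bm u)$ or $\bm\epsilon(\bm\phi)$ has already been subtracted, and the data oscillation $(\bm f,\bm\phi-\bm Q_0\bm\phi)$. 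You instead derive a dual analogue of the error equation \eqref{erreq}, test it with $\bm e_h$, and invoke the symmetry of $\bm a$ together with \eqref{erreq} tested with $\bm Q_h\bm\phi$, obtaining the symmetric split into four dual-data terms $\ell_i(\bm\phi,\bm e_h)$, $s(\bm Q_h\bm\phi,\bm e_h)$ and four primal-data terms $\ell_i(\bm u,\bm Q_h\bm\phi)$, $s(\bm Q_h\bm u,\bm Q_h\bm\phi)$. The price of this cleaner algebra is that the primal-data terms have no built-in double smallness, and you correctly supply the two mechanisms that restore $O(h^2)$: the sharpened jump bound $\sum_T h_T^{-1}\|Q_b(\bm Q_0\bm\phi)_n-Q_b\phi_n\|_{\partial T}^2\le Ch^2\|\bm\phi\|_2^2$ (which the paper uses only for the stabilization term, in \eqref{r5}), and the interior-edge cancellation of $\sum_T\langle\bm n\times(\bm Q_0\bm u-\bm u),\bm n\times\bm\epsilon(\bm\phi)\cdot\bm n\rangle_{\partial T}$, which lets you replace $\bm\epsilon_w(\bm Q_h\bm\phi)$ by $\bm\epsilon_w(\bm Q_h\bm\phi)-\bm\epsilon(\bm\phi)$ in $\ell_2(\bm u,\bm Q_h\bm\phi)$ and then conclude via the bound $\|\mathbb Q_h\bm\epsilon(\bm\phi)-\bm\epsilon_w(\bm Q_h\bm\phi)\|\le Ch\|\bm\phi\|_2$ from the proof of Corollary \ref{re3}; the paper relies on exactly the same cancellation, silently, when passing from \eqref{epsphi} to the third and fourth terms of \eqref{e0l2}. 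In short, your route buys reusability and transparency (the machinery of \eqref{erreq} and Lemma \ref{le:erres} is applied twice, with no fresh integration-by-parts computation), while the paper's direct expansion buys terms that are individually easier to estimate; both ultimately hinge on the same two technical gains, and your argument, including the explicit opening split $\|\bm u-\bm u_0\|\le\|\bm u-\bm Q_0\bm u\|+\|\bm e_0\|$ that the paper leaves implicit, is sound.
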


\begin{proof}
We multiply \eqref{e0} by $\bm e_0$ and integrate over $\Omega$ to get
\begin{align}\label{e02}
 \|\bm e_0\|^2=\left(-\nabla \cdot \bm \sigma(\bm{\phi}), \bm e_0\right) = 2 \mu \sum_{T \in \mathcal{T}_h}\big(\bm \epsilon ( \bm \phi), \bm\epsilon( \bm e_0)\big)_T +\lambda\sum_{T \in \mathcal{T}_h} (\nabla \cdot \bm \phi , \nabla \cdot \bm e_0)_T.   
\end{align}
For $(\bm \epsilon (\bm \phi),\bm\epsilon( \bm e_0))_T$, we apply the definition of $\mathbb Q_h$, \eqref{diff-eps}, and \eqref{eq:Qhg}  to derive
\begin{align}\label{epsphi}
(\bm \epsilon (\bm \phi),\bm\epsilon( \bm e_0))_T = &\ (\mathbb Q_h \bm \epsilon (\bm \phi),\bm\epsilon( \bm e_0))_T
\nonumber\\
=&\ (\mathbb Q_h\bm \epsilon (\bm \phi),\bm\epsilon_w(\bm e_h))_T+\langle\bm n\cdot\mathbb Q_h\bm \epsilon (\bm \phi)\cdot\bm n, (Q_be_{0,n}-e_b) \bm n_e\cdot\bm n\rangle_{\partial T}\nonumber\\
=&\ (\mathbb Q_h\bm \epsilon (\bm \phi),\mathbb Q_h\bm \epsilon (\bm u))_T-(\bm \epsilon_w(\bm Q_h \bm \phi),\bm \epsilon_w(\bm u_h))_T+\langle\bm n\cdot\mathbb Q_h\bm \epsilon (\bm \phi)\cdot\bm n, (Q_be_{0,n}-e_b) \bm n_e\cdot\bm n\rangle_{\partial T}\nonumber\\
&+\left\langle \bm{n} \times (\bm Q_0 \bm u  - \bm u) , \bm{n} \times \mathbb Q_h\bm \epsilon (\bm \phi)  \cdot \bm{n}\right\rangle_{\partial T}+\left\langle \bm{n} \times (\bm Q_0 \bm \phi  - \bm \phi) , \bm{n} \times \bm \epsilon_w (\bm u_h) \cdot \bm{n}\right\rangle_{\partial T}\nonumber\\
=&\ (\bm \epsilon (\bm \phi)-\mathbb Q_h\bm\epsilon (\bm \phi),\mathbb Q_h\bm\epsilon (\bm u)-\bm\epsilon (\bm u))_T+(\bm \epsilon (\bm \phi),\bm \epsilon (\bm u))_T-(\bm \epsilon_w(\bm u_h),\bm \epsilon_w(\bm Q_h \bm \phi))_T\nonumber\\
&+\langle\bm n\cdot\mathbb Q_h\bm \epsilon (\bm \phi)\cdot\bm n, (Q_be_{0,n}-e_b) \bm n_e\cdot\bm n\rangle_{\partial T}\nonumber\\
&+\left\langle \bm{n} \times (\bm Q_0 \bm u  - \bm u) , \bm{n} \times \mathbb Q_h\bm \epsilon (\bm \phi)  \cdot \bm{n}\right\rangle_{\partial T}+\left\langle \bm{n} \times (\bm Q_0 \bm \phi  - \bm \phi) , \bm{n} \times \bm \epsilon_w (\bm u_h) \cdot \bm{n}\right\rangle_{\partial T}
\end{align}
Similarly, for $(\nabla \cdot \bm \phi,\nabla \cdot \bm e_0)_T$, we apply the definition of $\mathcal Q_h$, \eqref{diff-div}, and \eqref{eq:Qhd}  to obtain
\begin{align}\label{divphi}
(\nabla \cdot \bm \phi,\nabla \cdot \bm e_0)_T = &\ (\mathcal Q_h \nabla \cdot \bm \phi,\nabla \cdot \bm e_0)_T
\nonumber\\
=&\ \langle\mathcal Q_h \nabla \cdot \bm \phi, (Q_be_{0,n}-e_b)\bm n_e\cdot\bm n\rangle_{\partial T}+(\mathcal Q_h \nabla \cdot \bm \phi,\nabla_w\cdot\bm e_h)_{T}\nonumber\\
=&\ \langle\mathcal Q_h \nabla \cdot \bm \phi, (Q_be_{0,n}-e_b)\bm n_e\cdot\bm n\rangle_{\partial T}+(\mathcal Q_h \nabla \cdot \bm \phi,\mathcal Q_h\nabla\cdot\bm u)_{T}-(\nabla_w\cdot \bm Q_h  \bm \phi,\nabla_w\cdot\bm u_h)_{T}\nonumber\\
=&\ \langle\mathcal Q_h \nabla \cdot \bm \phi, (Q_be_{0,n}-e_b)\bm n_e\cdot\bm n\rangle_{\partial T}+(\mathcal Q_h \nabla \cdot \bm \phi-\nabla \cdot \bm \phi,\nabla\cdot\bm u-\mathcal Q_h \nabla \cdot \bm u)_{T}\nonumber\\
&+ (\nabla \cdot \bm \phi,\nabla\cdot\bm u)_{T}-(\nabla_w\cdot \bm Q_h  \bm \phi,\nabla_w\cdot\bm u_h)_{T}
\end{align}
Plugging \eqref{epsphi} and \eqref{divphi} into \eqref{e02} leads to
\begin{equation}\label{e0l2}
\begin{aligned}
\|\bm e_0\|^2 =&\ 2 \mu \sum_{T \in \mathcal{T}_h}\big(\bm \epsilon ( \bm \phi), \bm \epsilon ( \bm e_0)\big)_T +\lambda\sum_{T \in \mathcal{T}_h} (\nabla \cdot \bm \phi , \nabla \cdot \bm e_0)_T\\
=& -2\mu (\bm\epsilon ( \bm \phi)-\mathbb Q_h \bm\epsilon ( \bm \phi),\bm\epsilon ( \bm u)-\mathbb Q_h \bm\epsilon ( \bm u))- 2\mu \ell_1(\bm\phi,\bm e_h)\\ 
&+ 2 \mu \sum_{T \in \mathcal{T}_h} \big \langle \bm n \times (\bm Q_0 \bm \phi-\bm \phi) ,  \bm{n} \times (\bm \epsilon_w (\bm u_h)-\bm \epsilon (\bm u)) \cdot \bm{n}\big\rangle_{\partial T}\\
&+ 2 \mu \sum_{T \in \mathcal{T}_h} \big \langle \bm n \times (\bm Q_0 \bm u-\bm u) ,  \bm{n} \times (\mathbb Q_h\bm\epsilon(\bm\phi)-\bm\epsilon(\bm\phi)) \cdot \bm{n}\big\rangle_{\partial T}\\
&-\lambda  (\nabla \cdot \bm \phi-\mathcal Q_h \nabla \cdot \bm \phi,\nabla \cdot \bm u-\mathcal Q_h \nabla \cdot \bm u)
- \lambda \ell_3(\bm\phi,\bm e_h)\\
&+ (\bm f,\bm \phi)-(\bm f, \bm Q_0 \bm \phi)- s\left( \bm e_h, \bm Q_h \bm \phi \right)+s\left( \bm Q_h \bm u, \bm Q_h \bm \phi \right)
\end{aligned}
\end{equation}
We estimate each term on the right-hand side of \eqref{e0l2}. For the first and fifth terms, we use \eqref{eqn-7.2} and \eqref{eqn-7.3} to obtain
\begin{align}\label{r1}
   & \Big|2\mu \big(\bm\epsilon ( \bm u)-\mathbb Q_h \bm\epsilon ( \bm u),\bm\epsilon ( \bm \phi)-\mathbb Q_h \bm\epsilon ( \bm \phi)\big)+\lambda \big(\nabla \cdot \bm \phi-\mathcal Q_h \nabla \cdot \bm \phi,\nabla \cdot \bm u-\mathcal Q_h \nabla \cdot \bm u\big)\Big|\nonumber\\
    \leq &\ Ch^2\|\bm u\|_2\|\bm\phi\|_2+C \lambda h^2\|\bm u\|_2\|\nabla \cdot \bm \phi \|_1.
\end{align}
For the third term, we use Cauchy-Schwarz inequality,  trace inequality, \eqref{eqn-7.1}, and \eqref{eqn-7.2} to have
\begin{align}\label{r2}
& \Big| 2 \mu \sum_{T \in \mathcal{T}_h} \big \langle \bm n \times (\bm Q_0 \bm u-\bm u) ,  \bm{n} \times (\mathbb Q_h\bm\epsilon(\bm\phi)-\bm\epsilon(\bm\phi)) \cdot \bm{n}\big\rangle_{\partial T}\Big| \nonumber\\
 \leq & \ C\bigg(\sum_{T \in \mathcal{T}_h} h_T^{-1}\| \bm u -\bm Q_0\bm u\|_{\partial T}^2\bigg)^{1/2} \bigg(\sum_{T \in \mathcal{T}_h} h_T\| \bm \epsilon (\bm \phi) - \mathbb{Q}_h\bm \epsilon (\bm \phi) \|_{\partial T}^2\bigg)^{1/2}\nonumber\\
 \leq  &\ C h^2 \|\bm u\|_2\|\bm \phi\|_2. 
\end{align}
For the fourth term, we use Cauchy-Schwarz inequality, trace inequality, \eqref{eqn-7.1}, \eqref{eherr}, and \eqref{epsest} to have
\begin{align}\label{r3}
& \Big| 2 \mu \sum_{T \in \mathcal{T}_h} \big \langle \bm n \times (\bm Q_0 \bm \phi-\bm \phi) ,  \bm{n} \times (\bm \epsilon_w (\bm u_h)-\bm \epsilon (\bm u)) \cdot \bm{n}\big\rangle_{\partial T} \Big|  \nonumber\\ 
 \leq & \  \bigg(\sum_{T \in \mathcal{T}_h} h_T^{-1} \left\|\bm n \times(\bm Q_0 \bm \phi - \bm \phi )\right\|_{\partial T}^2\bigg)^{1/2} \bigg(\sum_{T \in \mathcal{T}_h} h_T\left\|\bm \epsilon_w (\bm u_h)-\bm \epsilon (\bm u)\right\|_{\partial T}^2\bigg)^{1 / 2} \nonumber\\
 \leq & \ Ch^2  \|\bm \phi \|_2 \|\bm u\|_2
\end{align}
For the term $ (\bm f,\bm \phi)-(\bm f, \bm Q_0 \bm \phi)$,
\begin{align}\label{r4}
\big|(\bm f,\bm \phi)-(\bm f, \bm Q_0 \bm \phi)\big| \leq  C   \|\bm  f \|\| \bm \phi -\bm Q_0 \bm \phi\| \leq Ch^2 \|\bm f\| \|\bm \phi\|_2.
\end{align}
For the last term, we apply the Cauchy-Schwarz inequality, trace inequality, and \eqref{eqn-7.1} to derive
\begin{align}\label{r5}
\big|s\left(\bm Q_h \bm u, \bm Q_h \bm \phi\right)\big| & = \bigg|\sum_{T \in \mathcal{T}_h} h_T^{-1}\big\langle Q_b(Q_0u)_n-Q_b u_n , Q_b(Q_0\phi)_n-Q_b \phi_n\big\rangle_{\partial T}\bigg| \nonumber\\
& \leq \bigg(\sum_{T \in \mathcal{T}_h} h_T^{-1}\|(Q_0u)_n-u_n\|_{\partial T}^2\bigg)^{1/2}\bigg(\sum_{T \in \mathcal{T}_h} h_T^{-1}\|(Q_0\phi)_n- \phi_n\|_{\partial T}^2\bigg)^{1/2}\nonumber\\
& \leq C h^2  \|\bm u\|_{2}\|\bm \phi\|_{2}.
\end{align}
We complete the proof by combining \eqref{r1} -- \eqref{r5}, applying Lemma \ref{le:erres} to the term $-2\mu\ell_1(\bm\phi,\bm e_h)-\lambda\ell_3(\bm\phi,\bm e_h)-s(\bm e_h,\bm Q_h\bm\phi)$, and utilizing \eqref{eherr}, \eqref{assump-reg-phi}, and \eqref{regularity}.
\end{proof}

\section{Numerical Examples}\label{sec:num}
 In this section, we present numerical examples to verify the convergence and the locking-free property of the proposed method.

\begin{example}[2D accuracy test]\label{Vertex}  \rm
In this example, we validate the optimal convergence rates of our proposed method using a smooth displacement solution defined on $\Omega = (0,1)^2$:
$$
\boldsymbol{u}=\left(\begin{array}{c} \sin x \sin y+\frac{x}{\lambda}\\ \cos x \cos y+\frac{y}{\lambda} \end{array}\right).
$$
We impose the pure Dirichlet boundary condition $\bm{u}=\bm{u}_D$. The body force $\bm f$ and $\bm{u}_D$ are computed from the exact solution.

As $\lambda \rightarrow \infty$, a straightforward calculation shows that  $\nabla \cdot \boldsymbol{u} \rightarrow 0$. Consequently, this solution exhibits volumetric locking for a large $\lambda$. In order to show the optimal convergence rate along with   the locking-free property of our method, we consider the elasticity problem with two different material parameters on a uniform triangular mesh: $\lambda=1$ for compressible elasticity and $\lambda=10^6$ for nearly incompressible elasticity with $\mu=1$ in both cases. 
The numerical results in Table \ref{vortex tabel} validate the theoretical findings in Corollary \ref{re3} and Theorem \ref{l2err}, confirming that the proposed method achieves optimal convergence rates in both compressible and nearly incompressible cases, and demonstrating its locking-free property. Moreover, unlike the EG method in \cite{su2024parameter}, the proposed EG method provides an accurate approximation of stress without requiring any post-processing techniques.


\begin{table}[h]
\centering
\caption{Example \ref{Vertex}: 
Numerical results of the proposed method with $\lambda=1$ and $\lambda=10^{6}$.}
\begin{tabular}{|c|c|c|c|c|c|c|}
\hline $h$ & $\left\|\bm{u}-\bm{u}_0\right\|$ &Rate &$|\bm{u}-\bm{u}_0|_1 $ &Rate&$\|\bm \sigma (\bm{u})-\bm \sigma_w (\bm{u}_h)\|$  &Rate \\
\hline \multicolumn{7}{|c|}{$\lambda=1,\mu=1$} \\
\hline 1/8 &$1.665 \mathrm{e}-03$ &        & $7.032  \mathrm{e}-02$ &        & $1.152 \mathrm{e}-01$ &       \\
\hline 1/16&$3.882 \mathrm{e}-04$ & 2.1010 & $3.540  \mathrm{e}-02$ & 0.9903 & $5.879 \mathrm{e}-02$ & 0.9713\\
\hline 1/32&$9.338 \mathrm{e}-05$ & 2.0557 & $1.776  \mathrm{e}-02$ & 0.9946 & $2.967 \mathrm{e}-02$ & 0.9863\\
\hline 1/64&$2.287 \mathrm{e}-05$ & 2.0294 & $8.901  \mathrm{e}-03$ & 0.9971 & $1.491 \mathrm{e}-02$ & 0.9933\\
\hline \multicolumn{7}{|c|}{$\lambda=10^6,\mu=1$} \\ 
\hline 1/8 &$1.701 \mathrm{e}-03$ &         & $ 7.021 \mathrm{e}-02$ &         & $1.219 \mathrm{e}-01$ &       \\
\hline 1/16&$3.935 \mathrm{e}-04$ &  2.1124 & $ 3.528 \mathrm{e}-02$ &  0.9929 & $6.145 \mathrm{e}-02$ & 0.9886\\
\hline 1/32&$9.423 \mathrm{e}-05$ &  2.0620 & $ 1.769 \mathrm{e}-02$ &  0.9959 & $3.088 \mathrm{e}-02$ & 0.9927\\
\hline 1/64&$2.302 \mathrm{e}-05$ &  2.0329 & $ 8.859 \mathrm{e}-03$ &  0.9976 & $1.549 \mathrm{e}-02$ & 0.9954\\
\hline
\end{tabular}
\label{vortex tabel}
\end{table}

\end{example}
\begin{example}[3D accuracy test] \label{3D}  \rm
In this example, we verify the optimal convergence rates of our proposed method using a smooth displacement solution defined on $\Omega=(0,1)^3$ :
$$
\boldsymbol{u}=\left(\begin{array}{c}
2 \sin x \sin y \sin z+\frac{x}{\lambda} \\
\cos x \cos y \sin z+\frac{y}{\lambda} \\
\cos x \sin y \cos z+\frac{z}{\lambda}
\end{array}\right)
$$ 
We impose the pure Dirichlet boundary condition $\bm{u}=\bm{u}_D$. The body force $\bm f$ and $\bm{u}_D$ are computed from the exact solution.

Similar to Example \ref{Vertex}, we consider both the compressible case $(\lambda=1, \mu=1)$ and nearly incompressible case $\left(\lambda=10^6, \mu=1\right)$ to evaluate the convergence and locking-free property of our method. The numerical results on a uniform mesh are presented in Table \ref{3D tabel}, verifying the theoretical findings in  Corollary \ref{re3} and Theorem \ref{l2err}.
\begin{table}[h]
\centering
\caption{Example \ref{3D}: 
Numerical results of the proposed method with $\lambda=1$ and $\lambda=10^{6}$.}
\begin{tabular}{|c|c|c|c|c|c|c|}
\hline $h$ & $\left\|\bm{u}-\bm{u}_0\right\|$ &Rate &$|\bm{u}-\bm{u}_0|_1 $ &Rate&$\|\bm \sigma (\bm{u})-\bm \sigma_w (\bm{u}_h)\|$  &Rate \\
\hline \multicolumn{7}{|c|}{$\lambda=1,\mu=1$} \\
\hline 1/4  &$1.079 \mathrm{e}-02$ &         & $2.044  \mathrm{e}-01$ &         & $3.179 \mathrm{e}-01$ &       \\
\hline 1/8  &$2.481 \mathrm{e}-03$ &  2.1217 & $1.009  \mathrm{e}-01$ &  1.0176 & $1.529 \mathrm{e}-01$ &  1.0554\\
\hline 1/12 &$1.069 \mathrm{e}-03$ &  2.0752 & $6.701  \mathrm{e}-02$ &  1.0111 & $1.008 \mathrm{e}-01$ &  1.0275\\
\hline 1/16 &$5.926 \mathrm{e}-04$ &  2.0524 & $5.015  \mathrm{e}-02$ &  1.0076 & $7.526 \mathrm{e}-02$ &  1.0175\\
\hline \multicolumn{7}{|c|}{$\lambda=10^6,\mu=1$} \\ 
\hline 1/4  &$1.049 \mathrm{e}-02$ &         & $ 2.153 \mathrm{e}-01$ &         & $5.563 \mathrm{e}-01$ &       \\
\hline 1/8  &$2.408 \mathrm{e}-03$ &  2.1233 & $ 1.031 \mathrm{e}-01$ &  1.0623 & $2.018 \mathrm{e}-01$ & 1.4630\\
\hline 1/12 &$1.035 \mathrm{e}-03$ &  2.0816 & $ 6.783 \mathrm{e}-02$ &  1.0324 & $1.198 \mathrm{e}-01$ & 1.2859\\
\hline 1/16 &$5.728 \mathrm{e}-04$ &  2.0580 & $ 5.058 \mathrm{e}-02$ &  1.0201 & $8.531 \mathrm{e}-02$ & 1.1807\\
\hline
\end{tabular}
\label{3D tabel}
\end{table}


\end{example}
\begin{example}[A solution with a corner singularity and a large $\lambda$ ]\label{Lshaped}  
\rm
 We consider the linear elasticity problem \eqref{eq:l1} on an $L$-shaped domain $\Omega=(-1,1)^2 /([0,1] \times[-1,0])$. The exact displacement $\boldsymbol{u}=\left(u_1, u_2\right)^{\mathrm{T}}$ is chosen  in polar coordinates $(r, \theta)$ as follows
$$
\begin{aligned}
& u_1=\frac{1}{2 \mu} r^\gamma\Big(\cos \theta\big(-(1+\gamma) \cos (\theta+\gamma\theta)+(k-\gamma) Q \cos (\theta-\gamma\theta)\big) \\
& -\sin \theta\big((1+\gamma) \sin (\theta+\gamma\theta)-(k+\gamma) Q \sin (\theta-\gamma\theta)\big)\Big), \\
& u_2=\frac{1}{2 \mu} r^\gamma\Big(\sin \theta\big(-(1+\gamma) \cos (\theta+\gamma\theta)+(k-\gamma) Q \cos (\theta-\gamma\theta)\big) \\
& +\cos \theta\big((1+\gamma) \sin (\theta+\gamma\theta)-(k+\gamma) Q \sin (\theta-\gamma\theta)\big)\Big),
\end{aligned}
$$
where
\begin{itemize}
 \item $k=3-4 \nu$ with the Poisson ratio $\nu=\frac{\lambda}{2(\lambda+\mu)}$,
 \item $\gamma$ is obtained by solving equation $$ \sin \left(\frac{3 \pi}{2}\gamma \right)+\gamma \sin \left(\frac{3 \pi}{2}\right)=0, $$
 \item $Q$ is given by $Q=-\frac{\cos \left((\gamma+1) \frac{3 \pi}{4}\right)}{\cos \left((\gamma-1) \frac{3 \pi}{4}\right)}$.
\end{itemize}

As in \cite{yi2022locking}, we choose $\gamma=0.5444837367$ and $Q=0.5430755688$. The body force $\boldsymbol{f}=(0,0)^{\mathrm{T}}$. We set $\mu=1$ and $\lambda=10^6$ and consider the pure Dirichlet boundary condition.

Based on the results in \cite{babuvska1987hp}, we have \( \bm u \in\left[H^{1+\gamma-\varepsilon}(\Omega)\right]^2 \) and \(\bm \sigma \in\left[H^{\gamma-\varepsilon}(\Omega)\right]^{2 \times 2} \) for any \( \varepsilon > 0 \). Consequently, the expected convergence rate of the proposed method in the \( H^1 \)-seminorm is approximately 0.54.  
To validate this, we conduct numerical experiments on a nonuniform triangular mesh, as depicted in Figure \ref{figL}, which depicts the level-0 mesh. The next-level mesh is obtained by connecting the midpoints of the three edges of each triangle. The numerical results presented in Table \ref{Lnum} confirm the expected convergence rate.


\begin{figure}[htbp]
    \centering
\includegraphics[width=0.5 \textwidth]{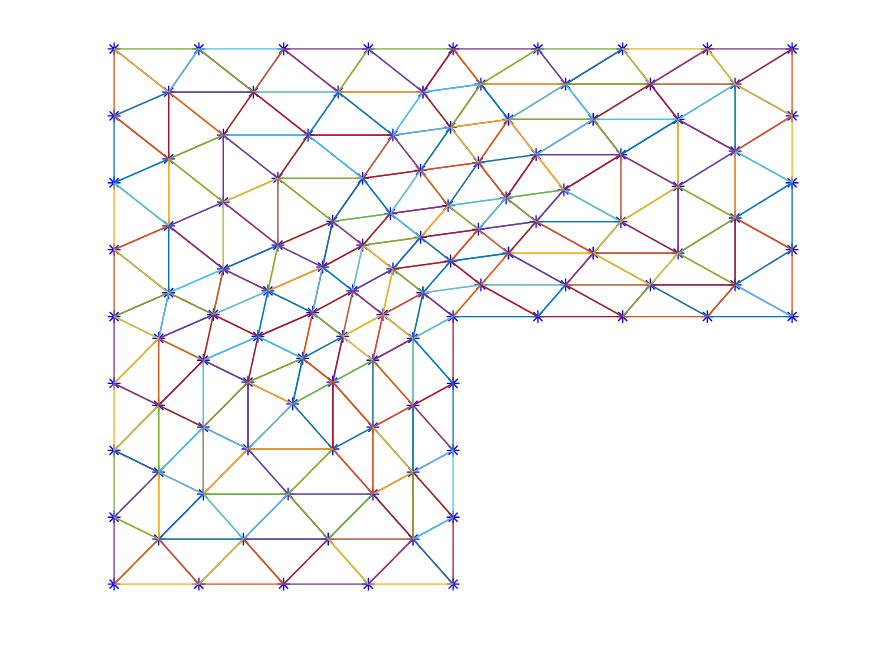}
    \caption{Example \ref{Lshaped}: The mesh of level 0 for the L-shaped domain.}
    \label{figL}
\end{figure}

\begin{table}[h]
\centering
\caption{Example \ref{Lshaped}: Numerical results of the proposed method with $\mu = 1$ and $\lambda = 10^6$.}
\begin{tabular}{|c|c|c|c|c|c|c|c|}
\hline Mesh level & Number of cells & $\left\|\bm{u}-\bm{u}_0\right\|$ &Rate &$|\bm{u}-\bm{u}_0|_1 $ &Rate &$\|\bm \sigma (\bm{u})-\bm \sigma_w (\bm{u}_h)\|$ &Rate \\
\hline 2 & 3072  &$ 3.879 \mathrm{e}-03$ &        & $2.486  \mathrm{e}-01$ &          &$ 1.881 \mathrm{e}+01$ &        \\
\hline 3 & 12288 &$ 1.373 \mathrm{e}-03$ & 1.4981 & $1.710  \mathrm{e}-01$ & 0.5397   &$ 6.641 \mathrm{e}+00$ & 1.5019 \\
\hline 4 & 49152 &$ 4.912 \mathrm{e}-04$ & 1.4831 & $1.174  \mathrm{e}-01$ & 0.5420   &$ 2.769 \mathrm{e}+00$ & 1.2617 \\
\hline 5 & 196608&$ 1.786 \mathrm{e}-04$ & 1.4597 & $8.062  \mathrm{e}-02$ & 0.5431   &$ 1.828 \mathrm{e}+00$ & 0.5994 \\
\hline
\end{tabular}
\label{Lnum}
\end{table}

\end{example}

\begin{example}[Cook's membrane test]\label{Cook}
\rm
The Cook’s membrane problem, as discussed in \cite{cook1974improved, piltner2000triangular}, is a classical benchmark for assessing element performance under combined bending and shear forces with moderate deformation. It is particularly important for evaluating volumetric locking effects. As illustrated in  Figure \ref{figC}, the problem consists of a tapered panel that is clamped on the left side while subjected to a shearing load on the right side. Specifically, 
we set
 \begin{itemize}
 \item $\bm {f}=(0,0)^{\mathrm{T}}$,
 \item Dirichlet boundary condition on the left boundary $\bm {u}=(0,0)^{\mathrm{T}}$,
 \item  vanishing Neumann boundary condition on the top and bottom boundary $\bm {\sigma} \bm {n}=(0,0)^{\mathrm{T}}$,
\item nonvanishing Neumann boundary condition on the right boundary $\bm {\sigma} \bm {n}=\left(0, \frac{1}{16}\right)^{\mathrm{T}}$.
 \end{itemize}
 
As in \cite{SEVILLA201943,su2024parameter}, we consider two cases to validate the performance of our EG method. The first case involves a compressible material with Young modulus $E=1$ and Poisson ratio $\nu=\frac{1}{3}$, and the second case involves a nearly incompressible material with Young modulus $E=1.12499998125$ and Poisson ratio $\nu=0.499999975$.


We present the numerical displacements for the two cases in Figure \ref{fig1} and Figure \ref{fig2}, which are in good agreement with those in \cite{SEVILLA201943,su2024parameter}. Additionally, in Figure \ref{fig3}, we plot the second component $u_2$ of the numerical displacement at $(48,52)$ against the number of DoFs. As the mesh is refined, 
$u_2(48,52)$ converges the reference value as documented in \cite{SEVILLA201943}. 

In plane strain,  the Von Mises stress is defined as
$$
\sigma_{\mathrm{VM}}=\sqrt{\frac{1}{2}\left(\sigma_{11}-\sigma_{22}\right)^2+\frac{1}{2}\left(\sigma_{33}-\sigma_{22}\right)^2+\frac{1}{2}\left(\sigma_{11}-\sigma_{33}\right)^2+3 \sigma_{12}^2},
$$
where $\sigma_{33}=\lambda \operatorname{div} \boldsymbol{u}$. Figure
\ref{figsig} depicts the Von Mises stress distributions for both compressible and nearly incompressible cases, revealing a singularity at the upper-left corner, which aligns with the findings reported in \cite{su2024parameter}.

\begin{figure}[htbp]
\centering
 \subfloat[Domain] {
\includegraphics[width=0.5\textwidth]{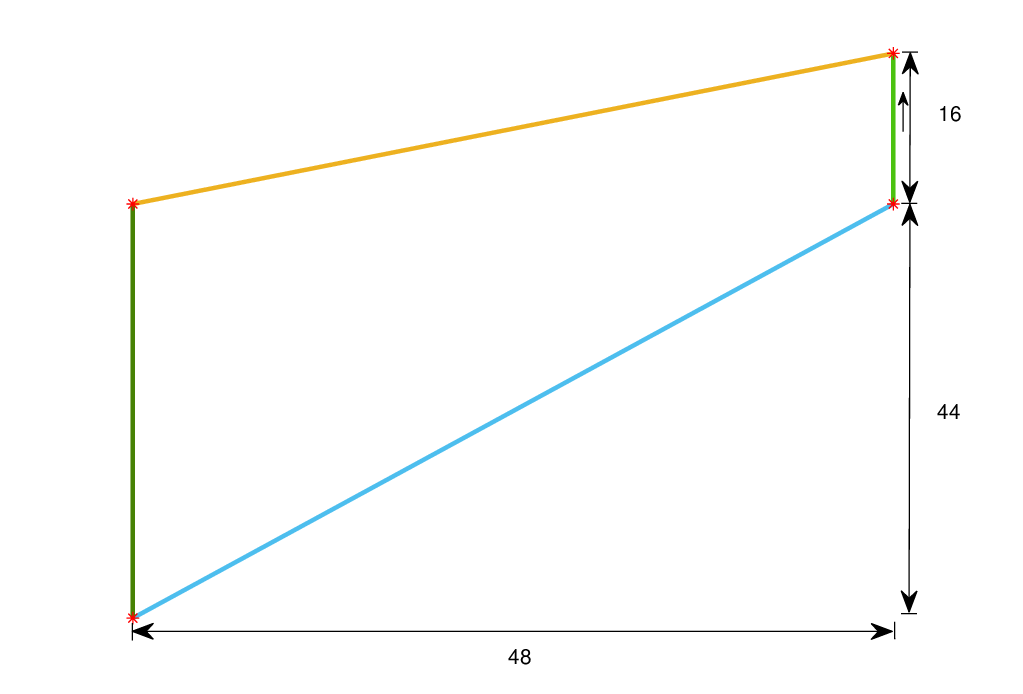}}
\subfloat[Mesh of level 1 of the domain] {
\centering
\includegraphics[width=0.45\textwidth]{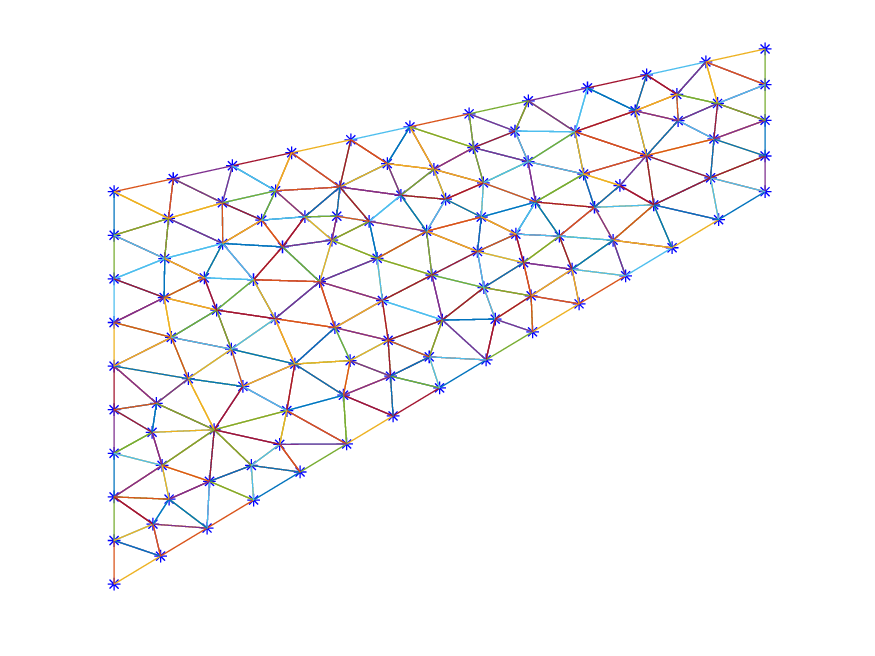}}
\caption{Example \ref{Cook}: The domain and mesh.
}
 \label{figC}
\end{figure}

\begin{figure}[htbp]
\centering
 \subfloat[$u_1$] {
\includegraphics[width=0.45\textwidth]{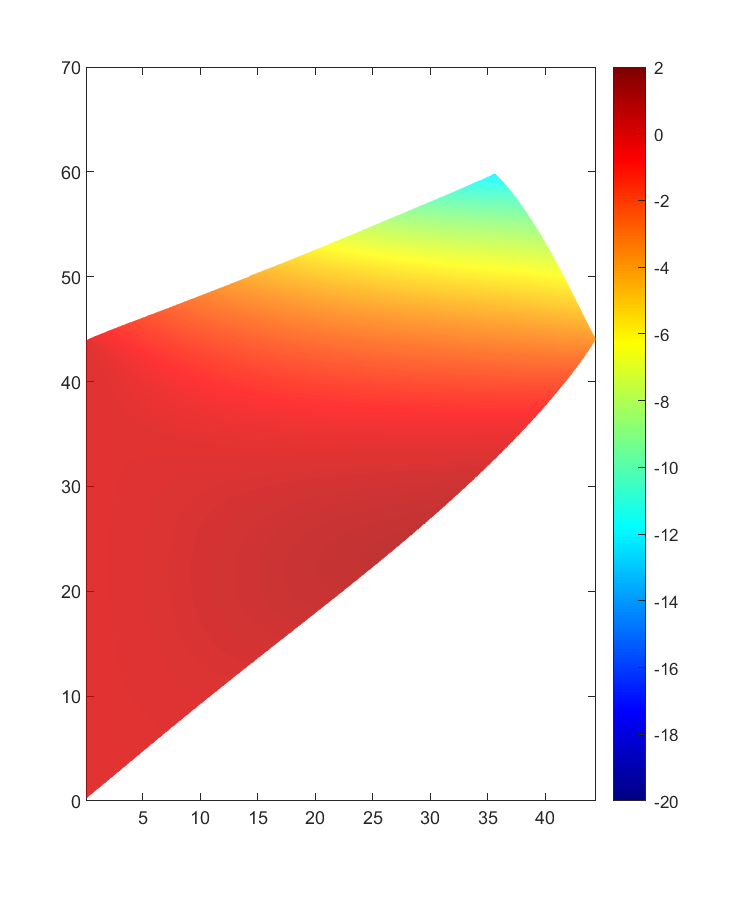}}
\subfloat[$u_2$] {
\centering
\includegraphics[width=0.45\textwidth]{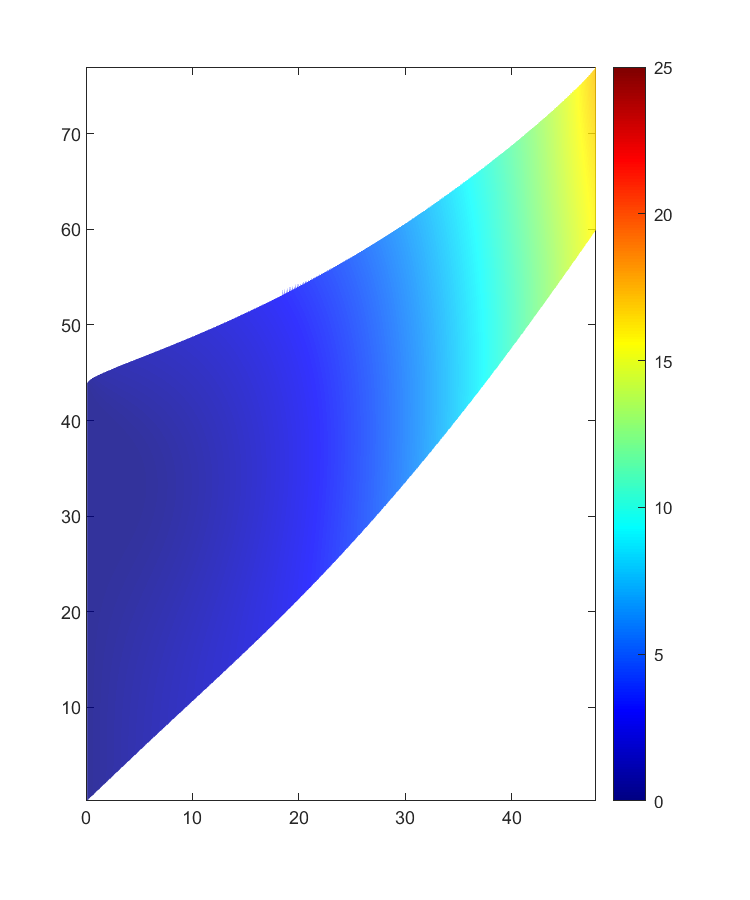}}
\caption{Example \ref{Cook}: The numerical displacement in nearly incompressible case on mesh of level 5.}
 \label{fig1}
\end{figure}
\begin{figure}[htbp]
\centering
 \subfloat[$u_1$] {
\includegraphics[width=0.45\textwidth]{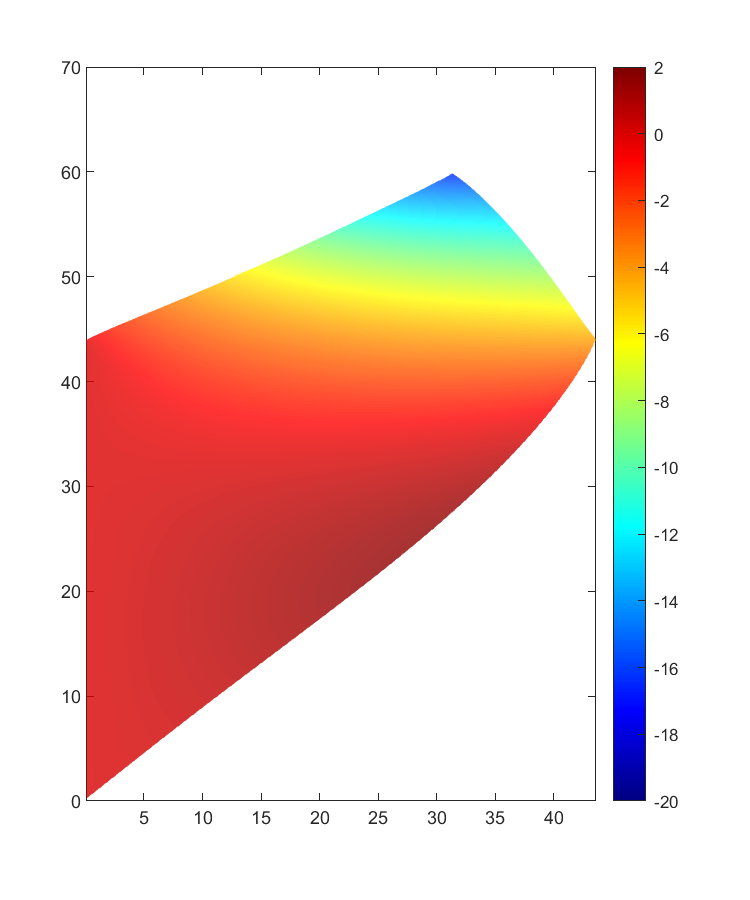}}
\subfloat[$u_2$]  {
\centering
\includegraphics[width=0.45\textwidth]{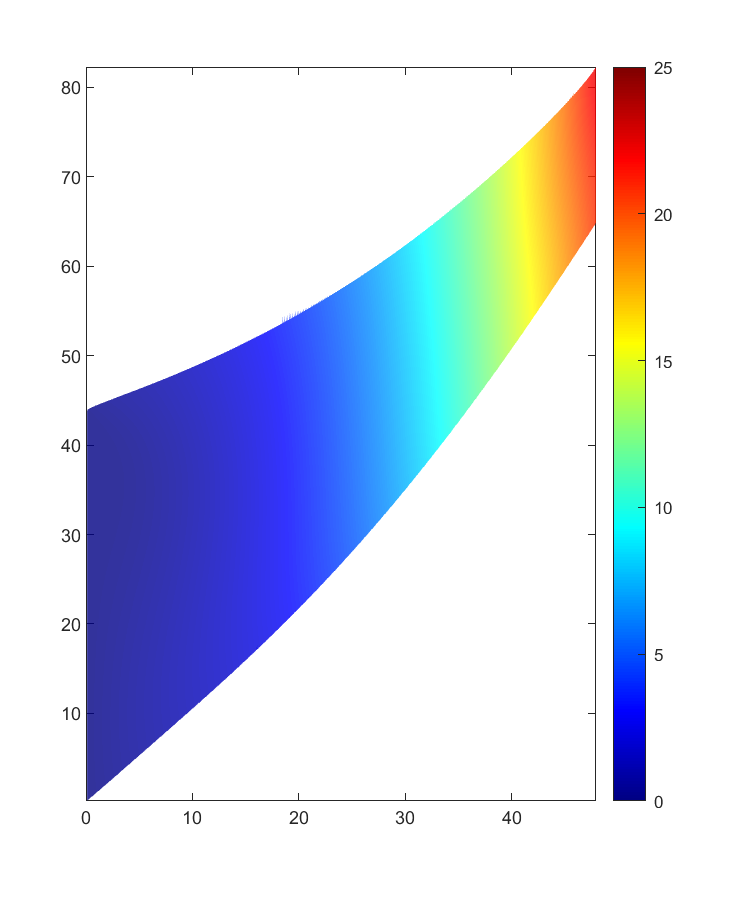}
 }\caption{Example \ref{Cook}: The numerical displacement in compressible case on mesh of level 5.}
 \label{fig2}
\end{figure}

\begin{figure}[htbp]
\centering
 \subfloat[$u_2$ in nearly incompressible case] {
\includegraphics[width=0.5\textwidth]{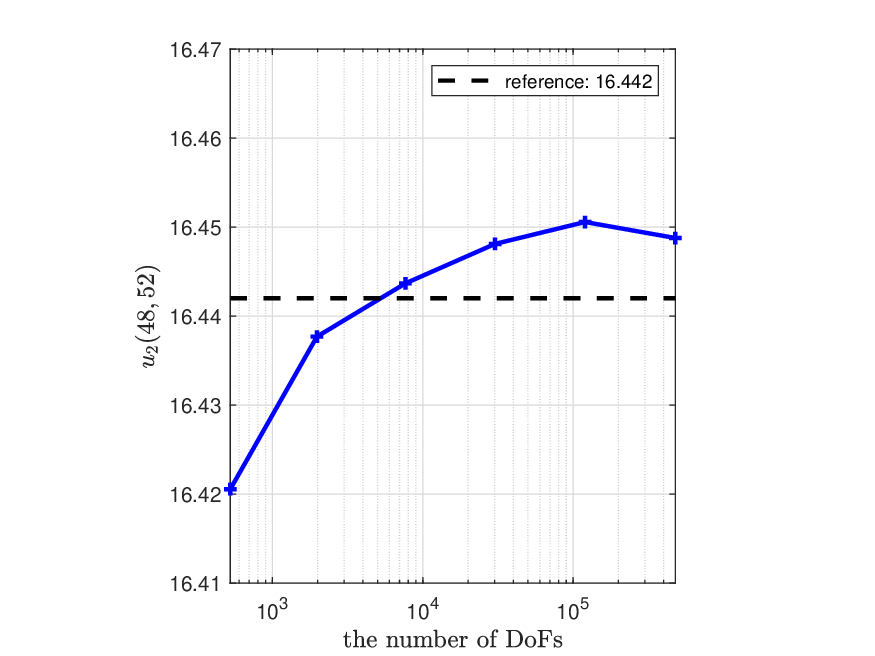}}
\subfloat[$u_2$ in compressible case]  {
\centering
\includegraphics[width=0.5\textwidth]{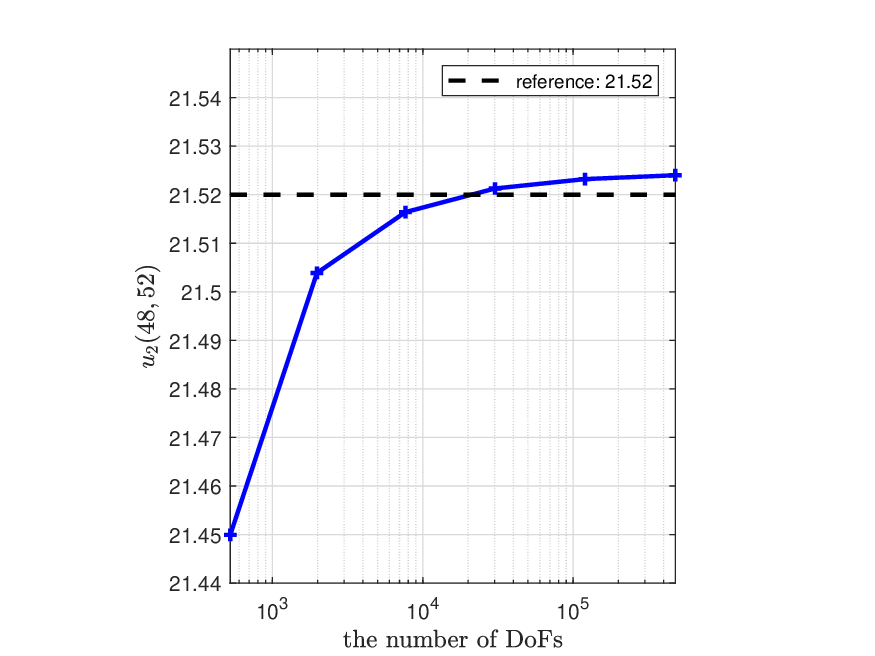}
 }\caption{Example \ref{Cook}: The numerical displacement at (48, 52).}
 \label{fig3}
\end{figure}

\begin{figure}[htbp]
\centering
 \subfloat[compressible case] {
\includegraphics[width=0.5\textwidth]{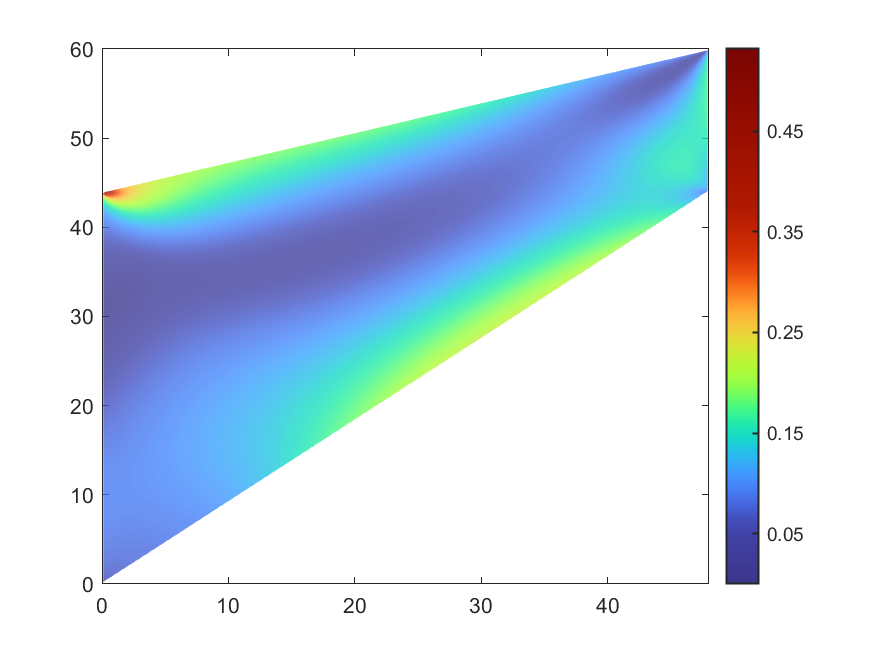}}
\subfloat[incompressible case] {
\centering
\includegraphics[width=0.5\textwidth]{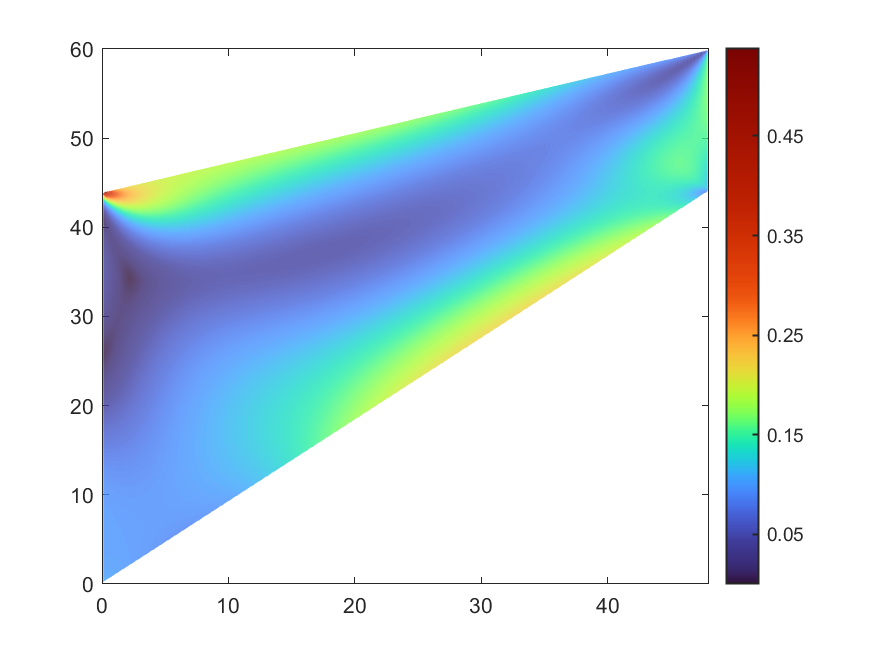}
 }\caption{Example \ref{Cook}: The numerical Von Misses stress in compressible and nearly incompressible cases on mesh of level 5.}
 \label{figsig}
\end{figure}

\end{example}

\section{Conclusions}\label{conclusion}
In this paper, we introduced a novel PF\&LF-EG method for linear elasticity problems in both 2D and 3D. Compared to the existing first-order EG method, our approach requires slightly more DoFs, as the enriched DG space is defined on edges or faces rather than on elements. However, this enrichment ensures an oscillation-free stress approximation without the need for post-processing. Additionally, the first-order divergence-free Stokes velocity element in \cite{hu2022family} can also lead to a locking-free method with an oscillation-free stress approximation. Compared to this method, our method maintains the same number of DoFs while avoiding the complexity of constructing modified bubbles with macro-element structures, reducing computational costs and improving feasibility. 
Furthermore, we establish rigorous error estimates that are independent of $\lambda$, theoretically confirming the method's locking-free property. Future work includes extending the method to higher-order approximations and applying it to other model problems.

\section*{Acknowledgements}
This work was partially  supported by the National Natural Science Foundation of China (No. 12201020).
\bibliographystyle{plain}
\bibliography{sample}

\end{document}